\newtheorem{thm}{Theorem}[section]
\newtheorem{prop}[thm]{Proposition}
\newtheorem{lem}[thm]{Lemma}
\newtheorem{lem-def}[thm]{Lemma-Definition}
\newtheorem{cor}[thm]{Corollary}
\theoremstyle{remark}
\newtheorem{rmk}{Remark}[section]
\theoremstyle{definition}
\newcommand{\nc}{\newcommand}
\nc{\on}{\operatorname}
\newcommand{\quash}[1]{} 
\newcommand{\frakb}{{\mathfrak b}}
\newcommand{\frakc}{{\mathfrak c}}
\newcommand{\frakg}{{\mathfrak g}}
\newcommand{\frakl}{{\mathfrak l}}
\newcommand{\frakn}{{\mathfrak n}}
\newcommand{\frakp}{{\mathfrak p}}
\newcommand{\frakt}{{\mathfrak t}}
\newcommand{\fraku}{{\mathfrak u}}
\newcommand{\bbA}{{\mathbb A}}
\newcommand{\bbB}{{\mathbb B}}
\newcommand{\bbC}{{\mathbb C}}
\newcommand{\bbD}{{\mathbb D}}
\newcommand{\bbF}{{\mathbb F}}
\newcommand{\bbG}{{\mathbb G}}
\newcommand{\bbQ}{{\mathbb Q}}
\newcommand{\bbZ}{{\mathbb Z}}
\newcommand{\calB}{{\mathcal B}}
\newcommand{\calF}{{\mathcal F}}
\newcommand{\calH}{{\mathcal H}}
\newcommand{\calI}{{\mathcal I}}
\newcommand{\calL}{{\mathcal L}}
\newcommand{\calO}{{\mathcal O}}
\newcommand{\calP}{{\mathcal P}}
\newcommand{\calS}{{\mathcal S}}
\newcommand{\calW}{{\mathcal W}}
\nc{\al}{{\alpha}} \nc{\be}{{\beta}}
\newcommand{\ga}{{\gamma}}
\nc{\Ga}{{\Gamma}}
\newcommand{\la}{{\lambda}}
\nc{\La}{{\Lambda}}
\nc{\ad}{{\on{ad}}}
\newcommand{\Ad}{{\on{Ad}}}
\nc{\Aff}{{\mathbf{Aff}}}
\nc{\der}{{\on{der}}}
\nc{\Fl}{{\calF\ell}}
\newcommand{\Gr}{{\on{Gr}}}
\newcommand{\Hom}{{\on{Hom}}}
\newcommand{\id}{{\on{id}}}
\nc{\Id}{{\on{id}}}
\newcommand{\ind}{{\on{ind}}}
\nc{\Ind}{{\on{Ind}}}
\newcommand{\Lie}{{\on{Lie}}}
\newcommand{\pr}{{\on{pr}}}
\newcommand{\Spec}{{\on{Spec}}}
\newcommand{\St}{{\on{St}}}
\nc{\tr}{{\on{tr}}}
\newcommand{\Mod}{{\mathrm{-Mod}}}
\newcommand{\bGr}{{\overline{\Gr}}}
\newcommand{\ppart}{(\!(t)\!)}
\numberwithin{equation}{section}
\title[Example of the derived Satake correspondence]{An example of the derived geometrical Satake correspondence over integers}
\address{Department of Mathematics, Harvard University, MA 02138}\email{xinwenz@math.harvard.edu}
\author{Xinwen Zhu}
\date{October, 2009}
\begin{document}
\maketitle
\begin{abstract}Let $G^\vee$ be a complex simple algebraic group. We describe certain morphisms of $G^\vee(\calO)$-equivariant complexes of sheaves on the affine Grassmannian $\Gr$ of $G^\vee$ in terms of certain morphisms of $G$-equivariant coherent sheaves on $\frakg$, where $G$ is the Langlands dual group of $G^\vee$ and $\frakg$ is its Lie algebra. This can be regarded as an example of the derived Satake correspondence.
\end{abstract}

\section*{Introduction}
Let $G^\vee$ be a complex reductive group, and $\Gr$ be its affine Grassmannian. Let $k$ be a commutative unital noetherian ring of finite global
dimension. The geometrical Satake correspondence (cf.
\cite{G,MV}) asserts that the Satake category $\calP_k$ of
$G^\vee(\calO)$-equivariant perverse sheaves with $k$-coefficients
on $\Gr$ is equivalent as a tensor category to the category of
representations of the Langlands dual group $G_k$ defined over $k$.
The Satake category arises as the heart of the natural t-structure on a triangulated category, the equivariant derived category $D_{G^\vee(\calO)}(\Gr)$. And it is a natural question, in particular asked by Drinfeld, to describe this so-called derived Satake category in terms of $G_k$\footnote{Indeed, the derived Satake category $D_{G^\vee(\calO)}(\Gr)$ has more structures than barely a triangulated category, and Drinfeld asked to describe these finer structures.}. When the coefficients
$k$ is a field of characteristic zero, a solution is given in
(cf. \cite{ABG, BeF}). Namely,
\begin{equation}\label{derived}
DS: D_{G^\vee(\calO)}(\Gr)\cong D_{perf}^G(S\frakg^*).\end{equation}
Here, $\frakg^*$ is the dual of the Lie algebra $\frakg$ of $G_k$, and $D_{perf}^G(S\frakg^*)$ is a "differential graded version" of the derived category of $G$-equivariant coherent sheaves on $\frakg$. The compatibility of the equivalence \eqref{derived} with the geometrical Satake isomorphism is as follows. Let $\calF$ be a $G^\vee(\calO)$-equivariant perverse sheaf on $\Gr$, and $V=H^*(\Gr,\calF)$ be the representation of $G_k$ under the geometrical correspondence. Then $DS(\calF)\cong V\otimes S\frakg^*$, where the grading(resp. $G$-action) on $V\otimes S\frakg^*$ is the product grading (resp. $G$-action).

Since the geometrical Satake correspondence holds for $k=\bbZ$, it
is natural to expect that the equivalence \eqref{derived} or at least the functor $DS$ should extend to integers,
after possibly inverting some small primes\footnote{The author was informed that Dennis Gaitsgory and Jacob Lurie have made significant progress to describe the full structure of the derived Satake category with arbitrary coefficients.}. Let us be a little more precise. Let $\bbZ_S$ denote the ring of integers after inverting some small primes. Then one can attach every Chevalley group scheme $G$ over $\bbZ_S$ its regular centralizer group scheme $J$ (see \cite{Ng} or \S \ref{regular} for its definition). According to a result of \cite{YZ}, the equivariant hypercohomology functor $H_{G^\vee(\calO)}$ gives a natural functor $D_{G^\vee(\calO)}(\Gr)\to D(J\Mod)$. On the other hand, there is a tautological functor $D^G(S\frakg^*)\to D(J\Mod)$ (see \eqref{fun}). One naturally expects that there is a functor $DS:D_{G^\vee(\calO)}(\Gr)\to D^G(S\frakg^*)$ that lifts $H_{G^\vee(\calO)}$.
Although such a lifting is not known to the
author at the moment, the goal of this note is to compare some natural morphisms in $D_{G^\vee(\calO)}(\Gr)$ with some natural morphisms in $D^G(S\frakg^*)$ by identifying their images in $D(J\Mod)$.

\section{Main results and notations}
\subsection{}We will assume that $G^\vee$ is simple and of adjoint type over $\bbC$. Let $\calO=\bbC[[t]]$ and $F=\bbC\ppart$. Then the affine
Grassmannian $\Gr=G^\vee(F)/G^\vee(\calO)$ of $G^\vee$ is a union of
projective varieties. To see this, we fix a Borel subgroup
$B^\vee\subset G^\vee$ and a maximal torus $T^\vee\subset B^\vee$.
Then each coweight $\lambda$ of $T^\vee$ determines a point
$t^\lambda\in T^\vee(F)$, and hence a point in $\Gr$, which we still
denote by $t^\lambda$. Let $\Gr^\lambda$ be the
$G^\vee(\calO)$-orbit through $t^\lambda$. Each
$G^\vee(\calO)$-orbit of $\Gr$ contains a unique point $t^\lambda$
for some {\em dominant} coweight $\lambda$. We denote the closure of
$\Gr^\lambda$ by $\bGr^\lambda$. Then each $\bGr^{\lambda}$ is a
projective variety and $\Gr$ is their union.

We denote $i^\la:\Gr^\la\to\Gr$ to be the natural locally closed
embedding. Then $i^{\la}_!:=i^{\la}_!\underline{\bbZ}[\dim\Gr^\la]$ and
$i^{\la}_*:=i^{\la}_*\underline{\bbZ}[\dim\Gr^\la]$ are naturally objects in
$D_{G^\vee(\calO)}(\Gr,\bbZ)$. In addition, their degree zero
perverse cohomology $I^\la_!:=\ ^pi^\la_!$ and $I^\la_*=\ ^pi^\la_*$
are the standard and the costandard objects in $\calP_\bbZ$. There
is a natural sequence of maps $i^\la_!\to I^\la_!\to I^\la_*\to i^\la_*$, which gives
\begin{equation}\label{seq}
H_{G^\vee(\calO)}(\Gr,i^\la_!)\to H_{G^\vee(\calO)}(\Gr,I^\la_!)\to H_{G^\vee(\calO)}(\Gr,I^\la_*)\to H_{G^\vee(\calO)}(\Gr,i^\la_*).
\end{equation}

Let us recall the description of the integral
homology of $\Gr$ in terms of $G_{\bbZ}$, the Langlands dual group of $G^\vee$ over $\bbZ$ (see below), following \cite{YZ} (the
description of the rational cohomology of $\Gr$ in terms of $G_\bbQ$
was obtained by Ginzburg (cf. \cite{G})). The
$G^\vee(\calO)$-equivariant homology of $\Gr$ is a commutative and
cocommutative Hopf algebra over $H(\bbB G^\vee,\bbZ)$, and therefore
$J=\Spec H_*^{G^\vee(\calO)}(\Gr,\bbZ)$ is a commutative group
scheme over $\Spec H(\bbB G^\vee,\bbZ)$.

Let $S$ be the multiplicative
set generated by the bad primes of $G^\vee$ (i.e., those dividing the coefficients of the highest root in terms of a linear combination of simple roots) and those dividing $n+1$
if $G^\vee$ is of type $A_n$. Let $\bbZ_S$ be the localization of $\bbZ$ by $S$. Then according to \cite{YZ}, $J$ is
canonically isomorphic to the regular centralizer of $G_{\bbZ_S}$ over
$\bbZ_S$ (see \S \ref{regular} for the review of the regular centralizer).
Observe that for any $\calF\in D_{G^\vee(\calO)}(\Gr,\bbZ_S)$, the
hypercohomology $H^*_{G^\vee(\calO)}(\Gr,\calF)$ is a module over
$J$. The goal of the note is to describe the sequence \eqref{seq} as $J$-modules.

\subsection{}
For brevity, we will suppress the subscript $\bbZ_S$ so that we write
$G$ for $G_{\bbZ_S}$, which is a simply-connected Chevalley group over $\bbZ_S$. Let $T\subset B$ be the maximal torus and the
Borel subgroup of $G$ dual to $T^\vee\subset B^\vee$. Let
$\frakt\subset\frakb\subset\frakg$ be their Lie algebras. Let
$\lambda$ be a dominant weight of $G$ w.r.t. $B$, and
$P_\lambda\supset B$ be the standard parabolic subgroup of $G$
corresponding to $\lambda$. That is, the Weyl group of $P_\la$
coincides with the stablizer of $\lambda$ in the Weyl group of $G$.
Let $\frakp_\la$ be the Lie algebra of $P_\la$. Let $\calP_\lambda$
the moduli scheme of parabolic subgroups of $G$ conjugate to
$P_\lambda$. There is an ample invertible sheaf $\calO(\lambda)$ on
$\calP_\la$, such that $\Gamma(\calP_\la,\calO(\la))^*$ is
isomorphic the Weyl module $W^\la$ of $G$ of highest weight
$\lambda$. Then $\Gamma(\calP_{\la},\calO(\la))$ is
isomorphic to the Schur module $S^{-w_0(\la)}$ of $G$, where $w_0$ is the longest
element in the Weyl group $W$ of $G$.

Next consider the partial Grothendieck alteration
\[\tilde{\frakg}_{\la}=G\times^{P_{\la}}\frakp_\la.\]
It embeds into $\calP_\la\times\frakg$ via
\begin{equation}\label{Groth alteration}
G\times^{P_{\la}}\frakp_\la\to G\times^{P_\la}\frakg\cong
G/P_\la\times\frakg.\end{equation}

Let us use the following notation. If $f:X\to\calP_\la$ is a
morphism, and $\calF$ is a coherent sheaf on $X$, then $\calF\otimes
f^*\calO(\la)$ is denoted by $\calF(\la)$. Let
$p:\calP_\la\times\frakg\to\frakg$ be the projection to the second
factor. We thus obtain a map of $G$-equivariant coherent sheaves
over $\frakg$,
\begin{equation}\label{sur}
p_*\calO_{\calP_\la\times\frakg}(\la)\to p_*\calO_{\tilde{\frakg}_\la}(\la).
\end{equation}
According to \eqref{fun}, there is a functor from the category of $G$-equivariant sheaves on $\frakg$ to the category of $J$-module. Let us denote the $J$-module corresponding to $p_*\calO_{\calP_\la\times\frakg}(\la)$ by $\calS^{-w_0(\la)}$ and to $p_*\calO_{\tilde{\frakg}_\la}(\la)$ by $\calL^{-w_0(\la)}_*$.
Let $\calW^\la$ (resp. $\calL^\la_!$) be the dual
of $\calS^{-w_0(\la)}$ (resp. $\calL_*^{-w_0(\la)}$) as $J$-modules.
We thus obtain a sequence of $J$-module maps
\begin{equation}\label{seqc}
\calL_!^\la\to\calW^\la\to\calS^\la\to\calL_*^\la,
\end{equation}
where $\calW^\la\to\calS^\la$ essentially comes from $W^\la\to
S^\la$.

The main result of this note is
\begin{thm}\label{Main1}
Over $\bbZ_S$, the sequence of maps \eqref{seq} is canonically
identified with \eqref{seqc} as $J$-modules.
\end{thm}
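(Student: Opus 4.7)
The plan is to split the four-term sequence \eqref{seq} into three composable maps and match each with the corresponding piece of \eqref{seqc}. The middle map is controlled by the geometric Satake equivalence, while the two outer maps are exchanged by Verdier duality on $\Gr$, so there are essentially only two independent comparisons.

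For the middle map, I would invoke the integral geometric Satake equivalence of Mirkovic-Vilonen, which identifies $I^\la_!$ with the Weyl module $W^\la$, $I^\la_*$ with the Schur module $S^\la$, and the canonical morphism $I^\la_!\to I^\la_*$ with $W^\la\to S^\la$. By \cite{YZ}, the equivariant hypercohomology functor upgrades this to an identification of $J$-modules compatible with \eqref{fun} applied to the free $S\frakg^*$-modules $V\otimes S\frakg^*$; this gives exactly $\calW^\la\to\calS^\la$.

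For the right tail $I^\la_*\to i^\la_*$, I would compute both sides as modules over $H^*(\bbB G^\vee,\bbZ_S)\cong\bbZ_S[\frakt]^W$. Since $\Gr^\la$ is an iterated affine bundle over the dual partial flag variety $G^\vee/P^\vee_\la$, the hypercohomology $H_{G^\vee(\calO)}(\Gr,i^\la_*)$ is a shift of $H^*(\bbB P^\vee_\la,\bbZ_S)\cong\bbZ_S[\frakt]^{W_\la}$. On the other side, $p_*\calO_{\tilde{\frakg}_\la}(\la)$ descended through $\frakg^{reg}\to\frakg^{reg}/G\cong\frakt/W$ is identified, via a Kostant section $\frakt/W\to\frakg^{reg}$, with $\bbZ_S[\frakt]^{W_\la}\otimes S^\la$ carrying the natural $J$-action. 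The surjection $\calS^\la\to\calL^\la_*$ becomes, over the regular semisimple locus, the inclusion of $W$-invariants into $W_\la$-invariants, and this matches the restriction from $\bGr^\la$ to its open stratum $\Gr^\la$ via the formalism of \cite{YZ}.

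The left tail $i^\la_!\to I^\la_!$ then follows by Verdier duality on $\Gr$, which sends this map to $I^{-w_0(\la)}_*\to i^{-w_0(\la)}_*$, while the $J$-module duality sends $\calL^\la_!\to\calW^\la$ to $\calS^{-w_0(\la)}\to\calL_*^{-w_0(\la)}$, and hypercohomology intertwines the two dualities. The main obstacle lies in the right tail, specifically in extending the comparison from the regular semisimple locus to all of $\frakt/W$: outside this open subset the fiber of $\tilde{\frakg}_\la\to\frakg$ can be non-reduced or higher-dimensional, so matching the $J$-equivariant structures requires either $G$-equivariant flatness of the partial Grothendieck alteration together with a projection formula, or a direct sheaf-theoretic identification on $\frakt/W$ bypassing fiberwise computations.
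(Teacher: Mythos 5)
Your overall skeleton matches the paper's: the middle square is handled via the $T^\vee$-equivariant results of \cite{YZ}, the left tail by Verdier duality, and the right tail by a comparison at the generic (regular semisimple) point, where both maps become ``projection onto the extremal weight spaces'' under localization. But the step you yourself flag as ``the main obstacle'' --- extending the right-tail comparison from the regular semisimple locus to all of $\frakt/\!\!/W$ --- is exactly where the substance of the paper lies, and neither of the two strategies you sketch works as stated. In particular, $p_*\calO_{\tilde{\frakg}_\la}(\la)$ is \emph{not} flat over $\frakg$ (the paper remarks this explicitly after Proposition \ref{freeness}), so ``$G$-equivariant flatness of the partial Grothendieck alteration plus a projection formula'' is not available. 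What the paper actually uses is: (a) surjectivity of $p_*\calO_{\calP_\la\times\frakg}(\la)\to p_*\calO_{\tilde{\frakg}_\la}(\la)$ over $\bbZ_S$ (Theorem \ref{surjectivity,family version}, whose proof via compatible Frobenius splitting in positive characteristic is the technical core of the paper); (b) local freeness of both sheaves over $\frakg^{reg}$ (Proposition \ref{freeness}), hence of $\calS^\la$ and $\calL^\la_*$ over $\frakc$; and (c) the corresponding facts on the topological side, namely that $H_{G^\vee(\calO)}(\Gr,I^\la_*)\to H_{G^\vee(\calO)}(\Gr,i^\la_*)$ is a surjection of free $H(\bbB G^\vee)$-modules (Proposition \ref{a}). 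Given two surjections of locally free modules out of identified sources, a compatible identification of the targets at the generic point automatically propagates everywhere; without (a)--(c) the generic comparison proves nothing. You would need either to cite these results or to reprove them.

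There are also two local inaccuracies in your right tail. First, $\calL^\la_*$ is not $\bbZ_S[\frakt]^{W_\la}\otimes S^\la$: by the Cartesian diagram over $\frakg^{reg}$ its rank over $\frakc$ is $|W/W_\la|$, not $|W/W_\la|\cdot\on{rk}\,S^\la$. Second, $\calS^\la\to\calL^\la_*$ is not an ``inclusion of $W$-invariants into $W_\la$-invariants'' but a surjection; at the generic point of $\frakt$ it is the restriction of sections of $\calO(\la)$ from $\calP_\la$ to the $T$-fixed subscheme, i.e., projection onto the weight spaces in the $W$-orbit of $\la$, which is what matches the equivariant-localization description of $H(\Gr,I^\la_*)_\xi\to H(\Gr,i^\la_*)_\xi$. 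Finally, surjectivity of $H(\Gr,I^\la_*)\to H(\Gr,i^\la_*)$ does not follow just from your affine-bundle description of $\Gr^\la$; the paper deduces it from the factorization $\bbZ_S[\dim\bGr^\la]\to I^\la_*\to i^\la_*$ and the surjectivity of $H(\bGr^\la)\to H(\Gr^\la)$.
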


This theorem has the following specialization. Let $e$ be a regular nilpotent element of $\frakg$. Then $(\calP_\la)_e:=e\times_{\frakg}\tilde{\frakg}_\la$ is called the (thick) Springer fiber of $e$, which is isomorphic to the scheme of zero locus of the vector field on $\calP_\la$ determined by $e$.
\begin{cor}Under the isomorphism of $G$-modules,
\[\Gamma(\calP_\la,\calO(\la))\cong S^{-w_0(\la)}\cong H(\Gr,I_*^{-w_0(\la)}),\]
the map $\Gamma(\calP_\la,\calO(\la))\to\Gamma((\calP_\la)_e,\calO(\la))$ corresponds to $H(\Gr,I_*^{-w_0(\la)})\to H(\Gr^{-w_0(\la)})$.
\end{cor}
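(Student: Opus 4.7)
The plan is to derive the Corollary by specializing Theorem~\ref{Main1} at the regular nilpotent $e$. The relevant portion of \eqref{seq} is the arrow $H_{G^\vee(\calO)}(\Gr,I_*^{-w_0(\la)})\to H_{G^\vee(\calO)}(\Gr,i_*^{-w_0(\la)})$, which the theorem identifies, as a map of $J$-modules, with $\calS^{-w_0(\la)}\to\calL_*^{-w_0(\la)}$ from~\eqref{seqc}. The $J$-module $H_{G^\vee(\calO)}(\Gr,-)$ is in particular a module over $H(\bbB G^\vee,\bbZ_S)=\calO_\frakc$ with $\frakc:=\frakg/\!/G$, and the regular nilpotent $e$ corresponds, via the Kostant section, to the origin $0\in\frakc$. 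Specializing all terms at this point turns $H_{G^\vee(\calO)}$ into the ordinary hypercohomology $H$; on the other side, specialization is computed by taking the fiber at $e\in\frakg$ of the underlying $G$-equivariant sheaf.

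For the source sheaf, $p_*\calO_{\calP_\la\times\frakg}(\la)\cong\Gamma(\calP_\la,\calO(\la))\otimes_{\bbZ_S}\calO_\frakg$, so its fiber at $e$ is manifestly $\Gamma(\calP_\la,\calO(\la))=S^{-w_0(\la)}$. For the target, the restriction of $p$ to $\tilde{\frakg}_\la$ is proper (because $\calP_\la$ is), and its scheme-theoretic fiber over $e$ is the thick Springer fiber $(\calP_\la)_e$. Since $e$ is regular nilpotent, there is a unique parabolic of type $P_\la$ whose Lie algebra contains $e$, so $(\calP_\la)_e$ is supported at a single point and is therefore a zero-dimensional (hence affine) scheme. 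Consequently $H^i((\calP_\la)_e,\calO(\la))=0$ for $i>0$, and cohomology-and-base-change yields that the fiber of $p_*\calO_{\tilde{\frakg}_\la}(\la)$ at $e$ is canonically $\Gamma((\calP_\la)_e,\calO(\la))$.

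The map $\calS^{-w_0(\la)}\to\calL_*^{-w_0(\la)}$ is the pushforward by $p$ of the surjection $\calO_{\calP_\la\times\frakg}(\la)\twoheadrightarrow\calO_{\tilde{\frakg}_\la}(\la)$ induced by the closed embedding~\eqref{Groth alteration}; by naturality, its specialization at $e$ is exactly the canonical restriction of sections $\Gamma(\calP_\la,\calO(\la))\to\Gamma((\calP_\la)_e,\calO(\la))$. Combined with the geometrical-Satake/Borel--Weil identification of the source, this is the statement of the Corollary. The only non-formal step is the cohomology-and-base-change argument for $p_*\calO_{\tilde{\frakg}_\la}(\la)$ at $e$, which is immediate here because of the zero-dimensionality of the regular-nilpotent Springer fiber; for non-regular elements one would need a relative Kempf-type vanishing statement along the partial Grothendieck alteration, which we thus avoid entirely.
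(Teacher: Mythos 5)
Your proposal is correct and is precisely the specialization argument the paper intends (the paper offers no proof beyond calling the corollary a ``specialization'' of Theorem \ref{Main1}). The one point you gloss over is that cohomology-and-base-change at $e$ also requires flatness of $\tilde{\frakg}_\la\to\frakg$ near $e$; this holds because $e$ is regular and $\tilde{\frakg}_\la^{reg}\to\frakg^{reg}$ is finite flat by the proof of Proposition \ref{freeness}, so the fiber of $p_*\calO_{\tilde{\frakg}_\la}(\la)$ at $e$ is indeed $\Gamma((\calP_\la)_e,\calO(\la))$.
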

This corollary was originally conjectured by Ginzburg (cf. \cite[Remark 10]{Bez}).

\subsection{} The main ingredient to prove Theorem \ref{Main1} is
\begin{thm} \label{surjectivity,family version}
Over $\bbZ_S$, the natural map \eqref{sur} is surjective.
\end{thm}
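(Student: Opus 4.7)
The plan is to realise $\calO_{\tilde{\frakg}_\la}$ as the structure sheaf of the zero locus of a regular section of a vector bundle on $\calP_\la\times\frakg$, invoke the associated Koszul resolution, and reduce the surjectivity of \eqref{sur} to a Kempf-type cohomology vanishing on $\calP_\la$. Let $\calN_\la := G\times^{P_\la}(\frakg/\frakp_\la)$, a rank-$r$ vector bundle on $\calP_\la$ with $r = \dim(\frakg/\frakp_\la)$, and let $\pi\colon \calP_\la\times\frakg\to\calP_\la$ denote the first projection. The embedding \eqref{Groth alteration} realises $\tilde{\frakg}_\la$ as the vanishing scheme of the canonical section of $\pi^*\calN_\la$, which is regular because $\tilde{\frakg}_\la$ has the expected codimension $r$. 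Twisting the Koszul complex by $\pi^*\calO(\la)$ therefore yields a resolution
\[
0\to\wedge^r\pi^*\calN_\la^\vee(\la)\to\cdots\to\pi^*\calN_\la^\vee(\la)\to\pi^*\calO(\la)\to\calO_{\tilde{\frakg}_\la}(\la)\to 0.
\]

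Applying $Rp_*$ and using the K\"unneth formula $R^jp_*\pi^*\calF\cong H^j(\calP_\la,\calF)\otimes\calO_\frakg$ for coherent $\calF$ on $\calP_\la$, the claim reduces to the cohomology vanishing
\begin{equation}\label{van}
H^j(\calP_\la,\,\wedge^i\calN_\la^\vee(\la))=0 \qquad \text{for all } i\geq 0,\ j>0.
\end{equation}
Granting \eqref{van}, each term of the Koszul complex has $Rp_*$ concentrated in degree zero, so $Rp_*$ of the whole resolution is the honest complex of $p_*$-pushforwards, which must then be exact in negative degrees; in particular its terminal arrow is \eqref{sur} and is surjective.

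It remains to prove \eqref{van} over $\bbZ_S$. The sheaf $\wedge^i\calN_\la^\vee(\la)$ is associated to the $P_\la$-module $V_i := \wedge^i(\frakg/\frakp_\la)^\vee\otimes k_\la$. The essential content is to show $V_i$ admits a good filtration over $\bbZ_S$ by $P_\la$-Schur modules whose highest weights are $G$-dominant, after which Kempf's vanishing theorem on $G/P_\la$---valid precisely because $\bbZ_S$ inverts the bad primes of $G$ and, in type $A_n$, the primes dividing $n+1$---yields \eqref{van}. The dominance step combines $L_\la$-dominance of the highest weights of the $L_\la$-composition factors of $\wedge^i(\frakg/\frakp_\la)^\vee$ (a Kostant-type calculation) with the fact that $\la$ pairs strictly positively with every simple coroot outside $L_\la$ (since $P_\la$ is the stabiliser of $\la$). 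The main obstacle is this integral vanishing: in characteristic zero \eqref{van} is immediate from Borel--Weil--Bott and complete reducibility, but over $\bbZ_S$ one must invoke good-filtration / tilting statements for $\frakg/\frakp_\la$ and its exterior powers as $P_\la$-modules, which is where the arithmetic hypotheses in $S$ become essential.
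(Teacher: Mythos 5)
Your Koszul-complex setup is sound, and in fact it is the same device the paper uses: $\tilde{\frakg}_\la\subset\calP_\la\times\frakg$ is the zero locus of the tautological section of $\pi^*(G\times^{P_\la}(\frakg/\frakp_\la))=\pi^*T\calP_\la$, so $\wedge^i\calN_\la^\vee=\Omega^i_{\calP_\la}$ and the surjectivity of \eqref{sur} reduces to cohomology vanishing for the twisted differentials $\Omega^i_{\calP_\la}(\la)$. The problem is the vanishing statement \eqref{van} you reduce to, and the argument you propose for it. What you are asserting is precisely Bott vanishing for the pair $(\calP_\la,\calO(\la))$, i.e.\ $H^{j}(\calP_\la,\Omega^i_{\calP_\la}(\la))=0$ for all $j>0$ and all $i$. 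This is false in general, already over $\bbC$: Bott vanishing is known to fail for flag varieties $G/P$ that are not products of projective spaces (e.g.\ for quadrics of dimension $\geq 3$ and for Grassmannians other than projective spaces, with the ample generator as the twist). Correspondingly, your proposed mechanism cannot work: the $T$-weights of $\wedge^i(\frakg/\frakp_\la)^\vee\otimes\calO(\la)$ are of the form $\la$ minus a sum of $i$ distinct roots of the nilradical $\frakn_\la$, and these are not $G$-dominant --- for instance the top exterior power is $\calO(\la-2\rho_{P_\la})$ with $2\rho_{P_\la}$ the sum of the roots of $\frakn_\la$, and $\la-2\rho_{P_\la}$ is dominant only for very large $\la$. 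Pairing $\la$ positively with the coroots outside $L_\la$ does not compensate for this. If a good filtration with $G$-dominant highest weights existed, Kempf vanishing would prove Bott vanishing for all $G/P$, which is not true; so the "dominance step" is not a gap to be filled but a step that fails.

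The paper circumvents exactly this obstruction. It first proves (Theorem \ref{Main2}) that $\tilde{\frakg}_\la\subset\calP_\la\times\frakg$ is compatibly Frobenius split over $\bar{\bbF}_p$ for every very good $p$; by the Mehta--Ramanathan mechanism, compatible splitting reduces the surjectivity of \eqref{sur} (and the $H^1$ vanishing) for the twist $\calO(\la)$ to the same statements for $\calO(m\la)$ with $m\gg 0$. Only then does it run your Koszul/spectral-sequence argument, where the required vanishing $H^{q}(\calP_\la,\Omega^p_{\calP_\la}(m\la))=0$, $q>0$, holds for $m$ large simply because $\calO(\la)$ is ample (Serre vanishing) --- no Bott-type or Kempf-type vanishing at the critical twist $\la$ is ever needed. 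The passage from $\bar{\bbF}_p$ for all very good $p$ back to $\bbZ_S$ is then a flatness and base-change argument. So to repair your proof you would need to supply the content of Theorem \ref{Main2} (the compatible splitting, which is the real work of the paper, built on the Kumar--Lauritzen--Thomsen splitting of the cotangent bundle) or some substitute for it; the reduction to \eqref{van} as stated is a reduction to a false statement.
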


The proof of Theorem \ref{surjectivity,family version}, in turn, relies on the following
result. Let $p$ be a very good prime of $G$ (i.e., those do not
belong to $S$). Then $G\otimes\bar{\bbF}_p$ is the Chevalley group
over $\bar{\bbF}_p$, and we have the corresponding partial
Grothendieck alteration for $G\otimes\bar{\bbF}_p$, which is just
the base change of the partial Grothendieck alteration \eqref{Groth
alteration} of $G$ to $\bar{\bbF}_p$.
We have

\begin{thm}\label{Main2}Let $p>0$ be a very good prime of $G$. Then
over $\bar{\bbF}_p$, the natural embedding
$\tilde{\frakg}_\la\to\calP_\la\times\frakg$ admits a compatibly
Frobenius splitting.
\end{thm}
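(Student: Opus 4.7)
The plan is to construct the compatible Frobenius splitting by first reducing to the full Grothendieck--Springer case ($P_\la=B$) and then combining $B$-canonical splittings on $G/B$ and on $\frakg$ via the associated-bundle construction. The reduction step proceeds as follows: let $\pi:G/B\times\frakg\to\calP_\la\times\frakg$ be the projection induced by $G/B\to G/P_\la$. Since $G/B\to G/P_\la$ is a flag bundle with fibers isomorphic to the flag variety of the Levi of $P_\la$, Kempf's vanishing in very good characteristic gives $R\pi_*\calO_{G/B\times\frakg}=\calO_{\calP_\la\times\frakg}$. The scheme-theoretic image of $\tilde{\frakg}=G\times^B\frakb$ under $\pi$ equals $\tilde{\frakg}_\la$, so applying $\pi_*$ to the ideal sheaf sequence $0\to\calI_{\tilde{\frakg}}\to\calO_{G/B\times\frakg}\to\calO_{\tilde{\frakg}}\to 0$ yields $\pi_*\calI_{\tilde{\frakg}}=\calI_{\tilde{\frakg}_\la}$. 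Consequently any compatible Frobenius splitting of $(G/B\times\frakg,\tilde{\frakg})$ pushes forward under $\pi_*$ to one of $(\calP_\la\times\frakg,\tilde{\frakg}_\la)$, reducing the problem to $P_\la=B$.

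For the Borel case, I would use the $G$-equivariant isomorphism $G/B\times\frakg\cong G\times^B\frakg$ coming from the map $(g,x)\mapsto(g,g^{-1}xg)$ on $G\times\frakg$, which intertwines the right $B$-action $(g,x)\cdot b=(gb,x)$ (quotient $G/B\times\frakg$) with the diagonal right $B$-action $(g,y)\cdot b=(gb,b^{-1}yb)$ (quotient $G\times^B\frakg$). Under this isomorphism, $\tilde{\frakg}$ corresponds to the sub-associated bundle $G\times^B\frakb$ arising from the $B$-stable subspace $\frakb\subset\frakg$, and the problem becomes to produce a Frobenius splitting of $G\times^B\frakg$ compatible with $G\times^B\frakb$. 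In root-space coordinates, $\frakg\cong\bbA^{\dim\frakg}$ and $\frakb$ is a coordinate subspace, so the standard monomial Frobenius splitting of the coordinate ring of $\frakg$ is compatible with the ideal of $\frakb$. I would then upgrade this to a $B$-canonical splitting in the sense of Mathieu and combine it with a $B$-canonical Frobenius splitting of $G/B$ compatible with all Schubert subvarieties (available in good characteristic by Mathieu and Mehta--Ramanathan); via the associated-bundle construction this yields a Frobenius splitting of $G\times^B\frakg$ compatible with $G\times^B\frakb$.

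The principal technical obstacle is verifying $B$-canonicity of the splitting of $\bar{\bbF}_p[\frakg]$ compatible with $\frakb$: the coordinate splitting is only manifestly $T$-canonical, and compatibility with the unipotent part of $B$ must be arranged, using that $\frakb$ is $B$-stable and that $p$ is very good. An alternative approach is to construct the splitting explicitly via a section of the anticanonical bundle of $G\times^B\frakg$, built from the tautological section of the pullback of the tangent bundle of $G/B$ whose zero locus is $G\times^B\frakb$, combined with a section realizing a $B$-canonical splitting of $G/B$. Once $B$-canonicity is in hand, the associated-bundle construction then yields the desired compatible splitting essentially formally.
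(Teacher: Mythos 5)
Your reduction to the Borel case is exactly the paper's step (i): push the splitting forward along the proper morphism $\calB\times\frakg\to\calP_\la\times\frakg$, which satisfies $\pi_*\calO=\calO$ and carries $\tilde{\frakg}$ onto $\tilde{\frakg}_\la$ (this is \cite[Proposition 4]{MR}; you do not even need the higher direct images to vanish). The framework you propose for the Borel case --- untwist to $G\times^B\frakg$, find a $B$-canonical splitting of the $B$-scheme $\frakg$ compatible with the $B$-stable subscheme $\frakb$, and apply the associated-bundle construction --- is also a legitimate way to organize the argument; the paper's section $\sigma\in(\calO_G\otimes(J^d\otimes k^{-2(p-1)\rho}))^B$, induced from a $B$-module map $\ga:\St\otimes\St\otimes k^{2(p-1)\rho}\to J^d$, is precisely such canonical-splitting data in disguise.

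The gap is that you never produce the $B$-canonical splitting of $(\frakg,\frakb)$, and this is where all the content of the theorem lies. You correctly note that the monomial splitting of $k[\frakg]$ is only $T$-canonical; but there is no general procedure to ``upgrade'' a given compatible splitting to a $B$-canonical one, and the existence of a $B$-canonical splitting compatible with $\frakb$ is essentially as hard as the theorem itself (for the cotangent bundle this is the whole point of Kumar--Lauritzen--Thomsen). Your alternative via the tautological section of $\pi^*T(G/B)$ cutting out $\tilde{\frakg}$ also does not produce a splitting by itself: $\tilde{\frakg}$ has codimension $\dim\fraku$, not $1$, so the $(p-1)$-st power of the determinant of that section vanishes to too high order, and one must still exhibit a specific weight-$2(p-1)\rho$ matrix coefficient whose local expansion has the residually normal-crossing form required by Lemma \ref{criterion of splitting}. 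The paper closes this gap with an idea absent from your proposal: since $p$ is very good there is a $G$-equivariant map $\varphi:G\to\frakg$ with $\varphi(e)=0$ and $(D\varphi)_e=\id$ (Bardsley--Richardson), which necessarily sends $B$ into $\frakb$; transporting the Steinberg-module splitting of $G$ compatible with $B$ (Lemma \ref{property of ga0}, after \cite{KLT} and \cite{LT}) through the induced isomorphism of completed local rings yields exactly the required map $\ga$ landing in $J^d$ with $\varepsilon\circ\ga\neq 0$. Without this, or an equivalent construction, your argument does not go through.
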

The basic facts about the Frobenius splitting will be recalled in \S
\ref{generality}.

\subsection{Plan of the paper}In \S \ref{Proof I}, after reviewing the basic facts of the Frobenius splitting, we will prove Theorem \ref{Main2} and Theorem \ref{surjectivity,family version}.  In \S \ref{proof II}, after reviewing the regular centralizer group scheme and the equivariant homology of the affine Grassmannian, we will prove Theorem \ref{Main1}.

\subsection{Further conventions and notations}
As mentioned above, $(G^\vee,B^\vee,T^\vee)$ will denote a complex simple group of adjoint type together with a Borel subgroup and a maximal torus contained in this Borel subgroups. Let
$(G,B,T)$ denote the Langlands dual of $(G^\vee,B^\vee,T^\vee)$ over $\bbZ_S$ and $\frakg\supset\frakb\supset\frakt$ be their Lie algebras. For a weight $\nu$ of $T$ and a base field $k$,
the 1-dimensional representation of $T_k$ (and therefore of $B_k$) corresponding to $\nu$ will
be denoted as $k^{\nu}$. The full flag variety of $G$ is denoted by $\calB=G/B$. The full Grothendieck alteration is denoted by $\tilde{\frakg}=G\times^B\frakb$.

All $G^\vee(\calO)$-equivariant complexes of sheaves on $\Gr$ are taken $\bbZ_S$-coefficients. The (co)homology $H^*(-)$ and $H_*(-)$ are also taken $\bbZ_S$.

For an affine scheme $A$, we will
use $\calO_A$ to denote the ring of functions on $A$. More
generally, if $\calF$ is a quasi-coherent sheaf on $A$, we will
denote the space of its global sections also by $\calF$.

If $H$ is an affine group scheme over some base, and $X$ is an affine $H$-scheme over the base, then we will denote $X/\!\!/H=\Spec\ \calO_X^H$ to be the GIT quotient.

\subsection{Acknowledgement} The author would like to thank
Roman Bezrukavnikov, Edward Frenkel, Dennis Gaitsgory, Joel
Kamnitzer, Shrawan Kumar and Zhiwei Yun for useful discussions.

\section{Proof of Theorem \ref{Main2}, Theorem \ref{surjectivity,family version}}\label{Proof I}
In this section, we prove Theorem \ref{Main2} and its Corollary
\ref{surjectivity,family version}.

\subsection{Generalities on the Frobenius splitting}\label{generality}
In this subsection, $k$ will denote an algebraically closed field of
characteristic $p>0$. Let us briefly recall the general setting.

Let $X$ be a scheme defined over $k$. The Frobenius twist $X'$ of
$X$ is the base change of $X$ along the absolutely Frobenius
morphism of $\Spec k$. Then there is a relative Frobenius morphism
$Fr:X\to X'$.

Recall that (cf. \cite{MR}) $X$ is called Frobenius split if the
$\calO_{X'}$-module map $\calO_{X'}\to Fr_*\calO_X$ admits a
splitting map (i.e. an $\calO_{X'}$-module map
$\varphi:Fr_*\calO_X\to\calO_{X'}$ satisfying $\varphi(1)=1$). Such
a splitting map is called a Frobenius splitting. If $i:Y\to X$ is a
closed embedding defined by an ideal sheaf $\calI_Y$, then $Y$ is
called compatibly split in $X$ if there is a splitting
$\varphi:Fr_*\calO_X\to\calO_{X'}$ which maps $Fr_*\calI_Y$ to
$\calI_{Y'}$. In this case, it induces a split
$\bar{\varphi}:Fr_*\calO_Y\to\calO_{Y'}$.

Assume that $X$ is a smooth scheme over $k$. Then the Grothendieck
duality implies that $\calH om(Fr_*\calO_X,\calO_{X'})\cong
Fr_*\omega_X^{1-p}$, where $\omega_X$ is the canonical sheaf of
$X$. Therefore, we will call a section of $Fr_*\omega_X^{1-p}$ a
\emph{splitting section} if it gives rise to a splitting of $X$
via the above isomorphism. Furthermore, using the Cartier
operator, the above isomorphism was written down explicitly in
\cite{MR}. We recall it in a form we need here.

\begin{lem}\label{criterion of splitting}Let $X=\Spec k[x_1,\ldots,x_n]$.
We will identify the natural map $\calO_{X'}\to Fr_*\calO_X$ as
$k[x_1^p,\ldots,x_n^p]\to k[x_1,\ldots,x_n]$ and
$Fr_*\omega_X^{1-p}$ as $k[x_1,\ldots,x_n](dx_1\wedge\cdots\wedge
dx_n)^{1-p}$. We will adopt the multiple-index notation, so that
$x^a=x_1^{a_1}x_2^{a_2}\cdots x_n^{a_n}$ for
$a=(a_1,\ldots,a_n)\in\bbZ^n$ and $dx=dx_1\wedge\cdots\wedge
dx_n$. Furthermore, if $m$ is an integer, we denote
$\underline{m}=(m,\ldots,m)\in\bbZ^n$. Then the isomorphism
$Fr_*\omega_X^{1-p}\cong\calH om(Fr_*\calO_X,\calO_{X'})$
is given by
\[x^a(dx)^{1-p}(x^b)=\left\{\begin{array}{ll} x^{a+b+\underline{1}-\underline{p}}& p|(a+b+\underline{1}-\underline{p})\\
                                                      0 & \text{otherwise.}\end{array}\right.\]
The same formula holds for $X=\Spec k[[x_1,\ldots,x_n]]$.
\end{lem}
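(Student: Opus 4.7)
The plan is to read the given formula as defining a map $\Phi \colon Fr_*\omega_X^{1-p} \to \calH om_{\calO_{X'}}(Fr_*\calO_X, \calO_{X'})$, verify by a direct basis computation that it is an isomorphism of $\calO_{X'}$-modules, and then identify it with the intrinsic isomorphism coming from Grothendieck duality for the finite flat morphism $Fr$, which is the Cartier-operator formula of \cite{MR}.

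For the setup I would note that $Fr \colon X \to X'$ is finite free of degree $p^n$, so $Fr_*\calO_X$ is a free $\calO_{X'}$-module with basis the monomials $\{x^c : 0 \le c_i < p\}$. Consequently $\calH om_{\calO_{X'}}(Fr_*\calO_X,\calO_{X'})$ is free of rank $p^n$ with dual basis $\{\phi_c\}$ characterized by $\phi_c(x^{c'}) = \delta_{c,c'}$, while $Fr_*\omega_X^{1-p}$ is free of rank $p^n$ with basis $\{x^c(dx)^{1-p} : 0 \le c_i < p\}$. I would then define $\Phi$ by $\calO_{X'}$-linearly extending the displayed formula.

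The next step is a string of elementary checks. First, the purported output $x^{a+b+\underline{1}-\underline{p}}$ really lies in $\calO_{X'}$ whenever $p \mid (a+b+\underline{1}-\underline{p})$: indeed $p \mid (a_i+b_i+1)$ together with $a_i,b_i \ge 0$ forces $a_i+b_i+1-p \ge 0$, and the exponent is then a nonnegative multiple of $p$. Second, for fixed $a$, the assignment $x^b \mapsto \Phi(x^a(dx)^{1-p})(x^b)$ is $\calO_{X'}$-linear, since the divisibility on $a+b+\underline{1}-\underline{p}$ is insensitive to replacing $b$ by $b+pc$ and the value then picks up the factor $(x')^c$. Third, $\Phi$ is itself $\calO_{X'}$-linear in $a$ by the same calculation. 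Fourth, for $0 \le c_i, c_i' < p$ the bound $c_i+c_i'+1 < 2p$ forces $c_i' = p-1-c_i$, with output $1$, so $\Phi(x^c(dx)^{1-p}) = \phi_{(p-1)\underline{1}-c}$; hence $\Phi$ merely permutes the dual basis and is an $\calO_{X'}$-module isomorphism.

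The only nonformal step, which I expect to be the main obstacle, is to identify $\Phi$ with the intrinsic Grothendieck-duality isomorphism $Fr_*\omega_X^{1-p} \cong Fr_*\omega_{X/X'} \cong \calH om_{\calO_{X'}}(Fr_*\calO_X, \calO_{X'})$. Both are $\calO_{X'}$-module isomorphisms between free modules of rank $p^n$, so a priori they differ by an $\calO_{X'}$-module automorphism of either side; pinning down the correct normalization is precisely what the explicit Cartier-operator computation of \cite{MR} accomplishes, and invoking that reference closes the argument. The formal-power-series case $X = \Spec k[[x_1,\ldots,x_n]]$ is handled verbatim, since the completed local ring is still free over $k[[x_1^p,\ldots,x_n^p]]$ on the same $p^n$ monomials and neither the divisibility bookkeeping nor the normalization argument is affected by completion.
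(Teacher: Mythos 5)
Your proposal is correct. Note that the paper itself gives no proof of this lemma: it is explicitly recalled from \cite{MR} ("using the Cartier operator, the above isomorphism was written down explicitly in \cite{MR}"), so there is nothing to compare line by line. Your direct verification is sound and in fact more self-contained than the paper's treatment: the observation that $Fr_*\calO_X$ is free over $\calO_{X'}$ on the monomials $x^c$ with $0\le c_i<p$, the divisibility check showing the output lands in $\calO_{X'}$, the $\calO_{X'}$-bilinearity, and the computation $\Phi(x^c(dx)^{1-p})=\phi_{\underline{p-1}-c}$ showing $\Phi$ permutes a basis of the dual module are all correct. The one step you defer---that this explicit isomorphism agrees on the nose with the Grothendieck--Cartier duality isomorphism rather than differing from it by an automorphism---is precisely the content of the citation to \cite{MR}, and is in any case immaterial for the way the lemma is used later in the paper: there one only needs that $f(dx)^{1-p}$ is a splitting section if and only if the corresponding map sends $1$ to $1$, which follows from your $\Phi$ being \emph{some} $\calO_{X'}$-module isomorphism together with the formula itself. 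Your remark that the power series case goes through verbatim is also correct.
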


We need another lemma in the sequel. The proof is easy and is left
to the readers.

\begin{lem}\label{compatible with completions} Let $X$ be a scheme
of finite type over $k$ and $Y\subset X$ be a closed subscheme of
$X$. Let $x\in Y$ be a closed point. Since $Fr:X\to X'$ is a
homeomorphism, we can also regard $x$ as a point in $X'$. Let
$\hat{\calO}_{X,x}$ (resp. $\hat{\calO}_{X',x}$, resp.
$\hat{\calI}_{Y,x}$, resp. $\hat{\calI}_{Y',x}$) be the completion
of $\calO_X$ (resp. $\calO_{X'}$, resp. $\calI_Y$, resp.
$\calI_{Y'}$) at $x$. For any $\calO_{X'}$-module map
$f:Fr_*\calO_X\to\calO_{X'}$, let  $f_x:\hat{\calO}_{X,x}\to\hat{\calO}_{X',x}$ be the induced $\hat{\calO}_{X',x}$-module
map. Then: (i) $f$ is a
splitting map if and only if $f_x$ splits the natural map
$\hat{\calO}_{X',x}\to\hat{\calO}_{X,x}$.; and (ii)
$f(Fr_*\calI_Y)\subset\calI_{Y'}$ if and only if
$f_x(\hat{\calI}_{Y,x})\subset \hat{\calI}_{Y',x}$.
\end{lem}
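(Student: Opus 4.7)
The plan is to deduce both parts from the faithful flatness of the completion of a Noetherian local ring. Since $X$ is of finite type over $k$, each stalk $\calO_{X,x}$ (resp.\ $\calO_{X',x}$) is Noetherian, and the natural map $\calO_{X,x}\to\hat{\calO}_{X,x}$ (resp.\ $\calO_{X',x}\to\hat{\calO}_{X',x}$) is faithfully flat. In particular it is injective, and inclusions of finitely generated submodules transfer faithfully between a stalk and its completion. This is the single technical ingredient; everything else is bookkeeping.

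For (i), the splitting condition $f\circ\iota=\id_{\calO_{X'}}$ is equivalent to the single $\calO_{X'}$-module-theoretic equation $f(1)=1\in\calO_{X'}$, which is a stalk-local condition. By injectivity of $\calO_{X',x}\hookrightarrow\hat{\calO}_{X',x}$, the stalk equation $f(1)=1$ in $\calO_{X',x}$ is equivalent to $f_x(1)=1$ in $\hat{\calO}_{X',x}$, which is precisely the assertion that $f_x$ splits the canonical map $\hat{\calO}_{X',x}\to\hat{\calO}_{X,x}$. The forward direction amounts to restriction followed by completion; the reverse direction is what requires the injectivity.

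For (ii), the compatibility condition $f(Fr_*\calI_Y)\subset\calI_{Y'}$ localizes at $x$ to $f(\calI_{Y,x})\subset\calI_{Y',x}$ inside $\calO_{X',x}$. Flatness of completion identifies $\hat{\calI}_{Y,x}$ with the image of $\calI_{Y,x}\otimes_{\calO_{X,x}}\hat{\calO}_{X,x}$ inside $\hat{\calO}_{X,x}$, and similarly for $\hat{\calI}_{Y',x}\subset\hat{\calO}_{X',x}$. Base change then gives the forward implication $f(\calI_{Y,x})\subset\calI_{Y',x}\Rightarrow f_x(\hat{\calI}_{Y,x})\subset\hat{\calI}_{Y',x}$, while faithful flatness of $\calO_{X',x}\hookrightarrow\hat{\calO}_{X',x}$, combined with the identity $\hat{\calI}_{Y',x}\cap\calO_{X',x}=\calI_{Y',x}$, yields the converse.

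The main (and essentially only) obstacle is keeping track of the faithful-flatness implications and the identifications of completed ideals; no new idea beyond Noetherianness of the stalks is required. This is exactly why the lemma is an exercise in the excerpt.
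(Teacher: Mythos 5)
The paper itself leaves this lemma to the reader, so there is no written proof to compare against; judged on its own terms, your argument is correct in the ``only if'' directions but has a gap in both ``if'' directions. Your key tool---Noetherianness of the stalks, injectivity and faithful flatness of $\calO_{X',x}\to\hat{\calO}_{X',x}$, and the identity $\hat{\calI}_{Y',x}\cap\calO_{X',x}=\calI_{Y',x}$---is exactly what is needed to pass between the stalk at $x$ and its completion. The problem is the remaining passage from the stalk at the \emph{single} point $x$ back to a statement on all of $X'$. Calling $f(1)=1$ a ``stalk-local condition'' means it holds globally iff it holds at \emph{every} stalk; the hypothesis of the lemma only concerns the completion at one point. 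From $f_x(1)=1$ you deduce $f(1)=1$ in $\calO_{X',x}$, hence on some open neighbourhood of $x$, and likewise in (ii) you only recover $f(\calI_{Y,x})\subset\calI_{Y',x}$, i.e.\ the containment $f(Fr_*\calI_Y)\subset\calI_{Y'}$ near $x$. For a general finite-type scheme this is genuinely insufficient: take $X$ disconnected, with $f$ a splitting on the component containing $x$ and zero on another component; then $f_x$ splits but $f$ does not.

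What rescues the statement (and is implicitly assumed, since the lemma is only applied with $X=G$ or $\frakg$, $Y=B$ or $\frakb$, and $x=e$ or $0$) is integrality. For (i): $f(1)-1$ is a global section of $\calO_{X'}$ vanishing on a nonempty open subset of the integral scheme $X'$, hence zero. For (ii): the image of $Fr_*\calI_Y$ under the composite $Fr_*\calI_Y\to\calO_{X'}\to\calO_{Y'}$ is an ideal sheaf on $Y'$ whose stalk at $x$ vanishes; since a nonzero ideal of a domain has full support and $Y'$ is integral, that image is zero. You should either add these two observations or record the integrality hypotheses explicitly. The rest of your write-up---in particular the identification $\hat{\calO}_{X,x}\cong\calO_{X,x}\otimes_{\calO_{X',x}}\hat{\calO}_{X',x}$ (which uses finiteness of the relative Frobenius for finite-type schemes over a perfect field) and the resulting base-change description of $\hat{\calI}_{Y,x}$---is sound.
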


\subsection{Proof of Theorem \ref{Main2}}
In this subsection, we assume that $G$ is simple and simply-connected over an algebraically closed field of characteristic $p>0$. Since all the schemes in the subsection are in fact defined over $\bbF_p$, we will not distinguish them from their Frobenius twists.

Let $P\subset G$ be a
parabolic subgroup and $\frakp\subset\frakg$ be its parabolic
subalgebra, and $\calP=G/P$ be the variety of parabolic subgroups of
$G$ that are conjugate to $P$. Let
$\tilde{\frakg}_\calP:=G\times^P\frakp$ be the partial Grothendieck
alteration. It embeds into $G\times^P\frakg\cong\calP\times\frakg$
as a closed subscheme. If the characteristic $p$ is very good for
$G$, then $\calP$ can be also regarded as the variety of parabolic
subalgebras of $\frakg$ that are conjugate to $\frakp$ (since in
this case the normalizer of $\frakp$ in $G$ is $P$), and
$\tilde{\frakg}_\calP$ can be regarded as the variety of pairs
$(\frakp',\xi)$, where $\frakp'$ is a parabolic subalgebra of
$\frakg$ conjugate to $\frakp$ and $\xi\in\frakp'$. We will prove
the following theorem.

\begin{thm}\label{Frob. splitting}Assume that the characteristic of $k$ is $p>0$ and $p$
is very good for $G$. Then the closed embedding
$\tilde{\frakg}_\calP\to\calP\times\frakg$ admits a compatibly
Frobenius splitting.
\end{thm}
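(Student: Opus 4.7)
By Lemma \ref{compatible with completions}, it suffices to exhibit a section $\sigma\in H^0(\calP\times\frakg,\omega_{\calP\times\frakg}^{1-p})$ which, at the formal neighborhood of one closed point $x_0\in\tilde{\frakg}_\calP$, realizes a Frobenius splitting compatible with the completion of the ideal $\calI$ of $\tilde{\frakg}_\calP$. I take $x_0=(P,0)$. Via the opposite Bruhat cell and the Levi-type decomposition $\frakg=\frakp\oplus\fraku_P^-$ (which is integral under the very good hypothesis on $p$), an affine neighborhood of $x_0$ is $\fraku_P^-\times\frakg$ with linear coordinates $u=(u_i)$, $x=(x_i)$, $y=(y_j)$ on the three factors $\fraku_P^-\subset\calP$, $\fraku_P^-\subset\frakg$, and $\frakp\subset\frakg$. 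The defining ideal $\calI$ is the regular sequence $\psi_i=x_i+(\text{higher order})$ coming from the $\fraku_P^-$-projection of $\Ad(\exp(-u))\xi$, and $\omega_{\calP\times\frakg}^{1-p}\cong p_1^*\calO_\calP(2(p-1)\rho_P)$ with local frame $(du\wedge dx\wedge dy)^{1-p}$.

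The plan is to construct $\sigma$ so that, writing its local expression as $f(u,x,y)(du\wedge dx\wedge dy)^{1-p}$, the monomial $u^{\underline{p-1}}x^{\underline{p-1}}y^{\underline{p-1}}$ appears in $f$ with coefficient $1$. Then Lemma \ref{criterion of splitting} gives $\sigma(1)=1$, so $\sigma$ splits Frobenius, while membership $\sigma\in\calI^{p-1}$ yields compatibility. The natural assembly combines the canonical Mehta--Ramanathan Frobenius splitting section of $\calP$ --- locally $u^{\underline{p-1}}(du)^{1-p}+(\text{higher order})$ at $P$, supplying the $u^{\underline{p-1}}$-factor --- with a distinguished element of the twisted cohomology $H^0(\calP\times\frakg,\omega_{\calP\times\frakg}^{1-p}\otimes\on{Sym}^{p-1}V^*)$, where $V=G\times^P(\frakg/\frakp)$ is the normal bundle of $\tilde{\frakg}_\calP$. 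The Koszul map $\on{Sym}^{p-1}V^*\to\calI^{p-1}$ (induced by the defining section $\psi$) converts this element into a section of $\omega_{\calP\times\frakg}^{1-p}\cdot\calI^{p-1}$, and the distinguished element is to be chosen so that the local Koszul monomial (supplying $x^{\underline{p-1}}$) and a polynomial factor on $\frakg$ (supplying $y^{\underline{p-1}}$) combine correctly with the $\calP$-side $u^{\underline{p-1}}$.

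The principal obstacle is the explicit globalization of this distinguished element: one must verify that the relevant cohomology group contains a vector whose evaluation at $x_0$ realizes the prescribed monomial, rather than only some $T$-weight projection of it. The very good hypothesis on $p$ enters at several points --- the integral Levi-type decomposition $\frakg=\frakp\oplus\fraku_P^-$, the non-degeneracy of a $G$-invariant form on $\frakg$ identifying $V^*\cong G\times^P\frakn_P$, and the existence of the Mehta--Ramanathan Frobenius splitting of $\calP$ with the expected local behavior at the $T$-fixed point $P$.
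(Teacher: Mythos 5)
Your overall strategy --- produce a global section of $\omega_{\calP\times\frakg}^{1-p}$ whose local expansion at a distinguished point contains the monomial $u^{\underline{p-1}}x^{\underline{p-1}}y^{\underline{p-1}}$ and which lies in a suitable power of $\calI_{\tilde{\frakg}_\calP}$ --- is the right one and is essentially the paper's. But the write-up has a genuine gap and a genuine error.

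The gap is the step you yourself flag as ``the principal obstacle'': the existence of a global section realizing the prescribed monomial on the nose. This is not a technical globalization issue to be deferred; it is the entire content of the theorem. The paper resolves it by (a) reducing to $\calP=\calB$ via $\pr_*\calO_{\calB\times\frakg}=\calO_{\calP\times\frakg}$ and \cite[Proposition 4]{MR}, so that one can use the Steinberg module; (b) producing candidate sections as $\ind(\ga)(v\otimes w)$ for a $B$-module map $\ga:\St\otimes\St\to J^d\otimes k^{-2(p-1)\rho}$ and invoking the Lauritzen--Thomsen theorem to guarantee that such sections split $\calB\times\frakg$ (this is exactly what rules out the extraneous monomials $x^{\underline{p-1}+pa}$, a point your ``coefficient $1$ on $u^{\underline{p-1}}x^{\underline{p-1}}y^{\underline{p-1}}$ implies $\sigma(1)=1$'' claim silently skips); and (c) constructing $\ga$ itself by writing an explicit matrix-coefficient section $\ga_0$ on the group $G$ and transporting it to $\frakg$ along the Bardsley--Richardson $G$-equivariant isomorphism $\varphi:G\to\frakg$ available in very good characteristic. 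None of this machinery appears in your proposal, and without some substitute for it the proof does not close.

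Separately, your compatibility criterion is wrong as stated. Membership $\sigma\in\calI^{p-1}\cdot\omega^{1-p}$ does \emph{not} imply $\sigma(Fr_*\calI)\subset\calI'$ when $\on{codim}\tilde{\frakg}_\calP=\dim(\frakg/\frakp)=:c>1$: locally, $\psi_1^{p-1}\in\calI^{p-1}$ fails the criterion of Lemma \ref{criterion of splitting} in the directions $x_2,\dots,x_c$. What one needs is (at least) vanishing to order $(p-1)c$ along $\tilde{\frakg}_\calP$ --- equivalently, membership in the paper's $B$-submodule $J=J_1+J_2$ with $J_1$ generated by $S^{(p-1)c}\fraku_-^*$ --- and correspondingly your ``$\on{Sym}^{p-1}V^*$'' must be $\on{Sym}^{(p-1)c}V^*$ (your intended monomial $x^{\underline{p-1}}=\prod_i x_i^{p-1}$ already has Koszul degree $(p-1)c$, so the two halves of your construction are inconsistent with each other). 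This is repairable, but as written the assembled $\sigma$ would not be compatible with $\tilde{\frakg}_\calP$.
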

\begin{proof} The proof is divided into several steps.

\medskip

(i) Consider the natural projection
$\pr:\calB\times\frakg\to\calP\times\frakg$. It is a proper
morphism satisfying
$\pr_*\calO_{\calB\times\frakg}=\calO_{\calP\times\frakg}$, which
maps $\tilde{\frakg}_\calB$ onto $\tilde{\frakg}_\calP$.
Therefore, according to \cite[Proposition 4]{MR}, it is enough to prove
the theorem for $\calP=\calB$.

\medskip

(ii) Let $\frakg^*$ be the dual of $\frakg$ so that
$\calO_\frakg=S\frakg^*$ is the symmetric algebra over $\frakg^*$.
It decomposes as $G$-modules according to the natural grading
$S\frakg^*=\sum\limits_{n} S^n\frakg^*$.

Let $\pi:\calB\times\frakg\to\calB$ be the projection. Then
$\omega_{\calB\times\frakg}\cong\pi^*\omega_{\calB}\cong\pi^*\calO_{\calB}(-2\rho)$.
Using the isomorphism $\calB\times\frakg\cong G\times^B\frakg$,
one can identify
\[\begin{split}&\Gamma(\calB\times\frakg,Fr_*\omega_{\calB\times\frakg}^{1-p})\cong\Gamma(\calB,\pi_*\pi^*\omega^{1-p}_{\calB})\\
&\cong(\calO_G\otimes(\calO_\frakg\otimes
k^{-2(p-1)\rho}))^B=\bigoplus_n(\calO_G\otimes(S^n\frakg^*\otimes
k^{-2(p-1)\rho}))^B,\end{split}\] where $\calO_\frakg\otimes
k^{-2(p-1)\rho}$ is regarded as a $B$-module. Let
$d=(p-1)\dim\frakg$. We will use the homogeneous piece
$(\calO_G\otimes(S^d\frakg^*\otimes k^{-2(p-1)\rho}))^B$.

Let us define a natural nonzero $G$-module homomorphism
\[\varepsilon: S^d\frakg^*\to k\]
as follows. Let $I$ be the ideal of $\calO_\frakg$ generated by
$\{v^p, \ v\in\frakg^*\}$. This is a $G$-submodule of
$\calO_\frakg$. Then one has the short exact sequence of
$G$-modules
\begin{equation}\label{ep}0\to S^{d}\frakg^*\cap I\to S^{d}\frakg^*\stackrel{\varepsilon}{\to} k\to 0.\end{equation}
Such a
$G$-module homomorphism gives a $B$-module homomorphism, still
denoted by $\varepsilon$,
\[\varepsilon: S^d\frakg^*\otimes k^{-2(p-1)\rho}\to k^{-2(p-1)\rho}.\]
Therefore, we obtain the following map
\[\begin{split}\ind(\varepsilon):&(\calO_G\otimes(S^d\frakg^*\otimes k^{-2(p-1)\rho}))^B\\
                                 &\to(\calO_G\otimes k^{-2(p-1)\rho})^B=\Gamma(\calB,Fr_*\omega_\calB^{1-p}).\end{split}\]

\begin{lem} A section
$\sigma\in(\calO_G\otimes(S^{d}\frakg^*\otimes
k^{-2(p-1)\rho}))^B\subset\Gamma(\calB\times\frakg,Fr_*\omega_{\calB\times\frakg}^{1-p}))$
is a splitting section of $\calB\times\frakg$ if and only if
$\ind(\varepsilon)(\sigma)$ is a splitting section of $\calB$.
\end{lem}
\begin{proof} The method used here is similar to \cite{KLT}. Let $U_-$ be the unipotent radical of $B_-$, which
is the Borel subgroup of $G$ opposite to $B$. Let $U_-\cdot[1]$ be
the big cell of $\calB$, where $[1]$ denotes our chosen Borel
subgroup $B\subset G$. We choose a system of homogeneous
coordinates $\{x_\alpha,\alpha\in\Delta_+\}$ of $U_-$, i.e.
$U_-=\Spec k[x_\alpha,\alpha\in\Delta_+]$, where $x_\alpha$ is a
$T$-weight function of $U_-$ of weight $\alpha$. Let us also
choose a system of homogeneous coordinates $\{y_i\in\frakg^*,
1\leq i\leq\dim\frakg\}$ for $\frakg$.

An element $\sigma\in(\calO_G\otimes (S^d\frakg^*\otimes
k^{-2(p-1)\rho}))^B\subset\Gamma(\calB\times\frakg,Fr_*\omega_{\calB\times\frakg}^{1-p}))$
restricts over $U_-\cdot[1]$ to an element of the form
\[\text{res}(\sigma)=f(dx\wedge dy)^{1-p}\in Fr_*\omega_{U_-\cdot[1]\times\frakg}^{1-p}\cong\calO_{U_-}\otimes S\frakg^*\otimes k^{-2(p-1)\rho}, \quad f\in\calO_{U_-}\otimes S^d\frakg^*.\]
According to Lemma \ref{criterion of splitting}, this is a
splitting section of $U_-\cdot[1]\times\frakg$ if and only if the
coefficient of the monomial
$x^{\underline{p-1}}y^{\underline{p-1}}$ appearing in $f$ is not
zero and the coefficients of the monomials
$x^{\underline{p-1}+pa}y^{\underline{p-1}+pb} ((a,b)\neq 0\in
\bbZ^{\dim U_-}_{\geq 0}\times\bbZ^{\dim\frakg}_{\geq 0})$
appearing in $f$ are zero. Since $\sigma\in\calO_{U_-}\otimes S^d\frakg^*$, no monomials of the form
$x^ay^{\underline{p-1}+pb} (b\neq 0\in \bbZ^{\dim\frakg}_{\geq
0})$ appear in $f$ (the degree of $y^{\underline{p-1}+pb}(b\neq 0\in \bbZ^{\dim\frakg}_{\geq
0})$ is greater than $d$).

On the other hand, the section
$\ind(\varepsilon)(\sigma)\in(\calO_G\otimes
k^{-2(p-1)\rho})^B=\Gamma(\calB,Fr_*\omega_{\calB}^{1-p})$
restricts over $U_-\cdot[1]$ to an element of the form
\[\text{res}(\ind(\varepsilon)(\sigma))=g(dx)^{1-p}\in Fr_*\omega_{U_-\cdot[1]}^{1-p}\cong\calO_{U_-}\otimes k^{-2(p-1)\rho}, \quad g\in\calO_{U_-}.\]
Again by Lemma \ref{criterion of splitting}, this is a splitting
section of $U_-\cdot[1]$ if and only if the coefficient of the
monomial $x^{\underline{p-1}}$ appearing in $g$ is not zero and
the coefficients of the monomials $x^{\underline{p-1}+pa}, (a\neq
0\in\bbZ^{\dim U_-}_{\geq 0})$ appearing in $g$ are zero.

Now let $\sigma\in (\calO_G\otimes(S^{d}\frakg^*\otimes
k^{-2(p-1)\rho}))^B$. Recall the notations
\[\text{res}(\sigma)=f(dx\wedge dy)^{1-p}, f\in\calO_{U_-}\otimes S^d\frakg^*, \quad\quad \text{res}(\ind(\varepsilon)(\sigma))=g(dx)^{1-p}, g\in\calO_{U_-}.\]
By the following commutative diagram
\[\begin{CD}
(\calO_G\otimes(S^{d}\frakg^*\otimes
k^{-2(p-1)\rho}))^B@>\ind(\varepsilon)>>(\calO_G\otimes
k^{-2(p-1)\rho})^B\\
@V\text{res}VV@V\text{res}VV\\
\calO_{U_-}\otimes(S^{d}\frakg^*\otimes
k^{-2(p-1)\rho})@>\text{id}\otimes\varepsilon>>\calO_{U_-}\otimes
k^{-2(p-1)\rho}
\end{CD}\]
and the definition of $\varepsilon$ (see \eqref{ep}), if we write
$f=f_1y^{\underline{p-1}}+(\text{other} \ \text{terms})$ for some
$f_1\in\calO_{U_-}$, then $g=f_1$.

Therefore, if $\ind(\varepsilon)(\sigma)$ is a splitting section
of $\calB$, then the monomial
$x^{\underline{p-1}+pa}(a\in\bbZ^{\dim U_-}_{\geq 0})$ appears in
$g=f_1$ if and only if $a=0$, which implies the monomial
$x^{\underline{p-1}+pa}y^{\underline{p-1}+pb}((a,b)\in\bbZ^{\dim
U_-}_{\geq 0}\times\bbZ^{\dim\frakg}_{\geq 0})$ appears in $f$ if
and only if $(a,b)=0$. This in turn implies that $\sigma$ is a
splitting section of $\calB\times\frakg$. By the same argument,
the converse holds and the lemma is proven.
\end{proof}

\medskip

(iii) Assume that $\sigma\in (\calO_G\otimes(S\frakg^*\otimes
k^{-2(p-1)\rho}))^B\cong\Hom(Fr_*\calO_{\calB\times\frakg},\calO_{\calB\times\frakg})$.
Let us see when $\sigma(Fr_*\calI_{\tilde{\frakg}})\subset
\calI_{\tilde{\frakg}}$, so that it induces a map
$\sigma:Fr_*\calO_{\tilde{\frakg}}\to\calO_{\tilde{\frakg}}$,
where $\calI_{\tilde{\frakg}}$ is the sheaf of ideals defining
$\tilde{\frakg}\subset\calB\times\frakg$. Let
$\calI_\frakb\subset\calO_\frakg$ denote the ideal defining
$\frakb\subset\frakg$. We define a $B$-submodule $J\subset
\calO_\frakg$ as follows. Observe there is a unique up to scalar
$\calO_\frakg$-module isomorphism $\calO_\frakg\cong\omega_\frakg$.
By Composing it with the isomorphism in Lemma \ref{criterion of
splitting}, we obtain an isomorphism (up to scalar)
$Fr_*\calO_\frakg\cong\Hom(Fr_*\calO_\frakg,\calO_{\frakg})$. We
define
\begin{equation}\label{J}J=\{\delta:Fr_*\calO_\frakg\to\calO_{\frakg}, \quad \delta(Fr_*\calI_\frakb)\subset\calI_{\frakb}\}.\end{equation}
Here is a more concrete description of $J\subset S\frakg^*=Fr_*\calO_\frakg$, from
which the $B$-module structure of $J$ is clear. Fixing the maximal
torus $T$, we have the decomposition $\frakg=\fraku_-+\frakb$. But
as $B$-modules, we only have
\[0\to\fraku^*_-\to\frakg^*\to\frakb^*\to 0.\] Let
$J_1=\text{Im}(S^{(p-1)\dim\fraku_-}\fraku^*_-\otimes S\frakg^*\to
S\frakg^*)$. This is a $B$-submodule of $S\frakg^*$. On the other
hand, let $J_2$ be the ideal of $S\frakg^*$ generated by $\{v^p,\
v\in\fraku^*_-\}$. which is also a $B$-submodule of $S^d\frakg^*$.
Then $J=J_1+J_2$ by Lemma \ref{criterion of splitting}. Observe that $J=\oplus_n J^n$, where $J^n=J\cap
S^n\frakg^*$.
\begin{lem}A section $\sigma\in (\calO_G\otimes(S\frakg^*\otimes
k^{-2(p-1)\rho}))^B\cong\Hom(Fr_*\calO_{\calB\times\frakg},\calO_{\calB\times\frakg})$
maps $Fr_*\calI_{\tilde{\frakg}}$ to $\calI_{\tilde{\frakg}}$ if
and only if $\sigma\in (\calO_G\otimes(J\otimes
k^{-2(p-1)\rho}))^B\subset(\calO_G\otimes(S\frakg^*\otimes
k^{-2(p-1)\rho}))^B$.
\end{lem}
\begin{proof}It is enough to see when
$\sigma(Fr_*\calI_{\tilde{\frakg}})\subset\calI_{\tilde{\frakg}}$
over $U_-\cdot[1]$. Then the lemma follows the first description of
$J$.
\end{proof}

\medskip

(iv) From previous two lemmas, to finish the proof of the theorem,
it is enough to construct a section $\sigma\in
(\calO_G\otimes(J^d\otimes k^{-2(p-1)\rho}))^B$ such that
$\ind(\varepsilon)(\sigma)$ gives a splitting of $\calB$.

Let $\St:=W^{(p-1)\rho}=S^{(p-1)\rho}$ be the first Steinberg module of $G$,
which is irreducible and selfdual. Here $\rho$ is the sum of fundamental weights of $T$. Let us fix a $G$-invariant
non-degenerate bilinear form $(\cdot,\cdot)$ on $\St$. The theorem
then would follow if we could construct a $B$-module homomorphism
\begin{equation}\label{B-map}\gamma:\St\otimes\St\to J^d\otimes k^{-2(p-1)\rho} \quad\text{s.t.}\quad \varepsilon\circ\gamma\neq 0:\St\otimes\St\to k^{-2(p-1)\rho}.\end{equation}
This is because then we would have the following nonzero $G$-module maps
\[\St\otimes\St\stackrel{\ind(\ga)}{\to}(\calO_G\otimes(J^d\otimes k^{-2(p-1)\rho}))^B\stackrel{\ind(\varepsilon)}{\to}(\calO_G\otimes k^{-2(p-1)\rho})^B,\]
and according to the main theorem of \cite{LT}, any
$\sigma=\ind(\ga)(v\otimes w)$ for $v\otimes w\in\St\otimes\St, \
(v,w)\neq 0$ would satisfy our purpose.

\medskip

(v) It remains to construct a $B$-module homomorphism
(\ref{B-map}). However, let us first define a $B$-module
homomorphism
\[\ga_0:\St\otimes\St\otimes k^{2(p-1)\rho}\to\calO_G\] by the following
formula: let $v_+$ (resp. $v_-$) be a nonzero highest (resp.
lowest) weight vector in $\St$, then
\[\quad\quad \ga_0(v\otimes w\otimes v_+\otimes v_+)(g)=(v,gw)(v_+,gv_+), \quad v\otimes w\in\St\otimes\St, g\in G.\]
Since $\omega_G$ is trivial and $\Gamma(G,\calO^*)=k^*$, there is
a unique (up to scalar) $\calO_G$-module isomorphism
$i:\calO_G\cong\omega_G$. Thus, we obtain an isomorphism
$\calO_G\to\omega_G^{1-p}, f\mapsto fi(1)^{1-p}$, and by Lemma
\ref{criterion of splitting}, we obtain a map, still denoted by
$\ga_0$
\[\ga_0:\St\otimes\St\otimes k^{2(p-1)\rho}\to Fr_*\omega_G^{1-p}\cong\Hom(Fr_*\calO_G,\calO_{G}).\]

The main properties of $\ga_0$ is summarized in the following
lemma. Let $\calI_B$ be the ideal defining $B\subset G$.
\begin{lem}\label{property of ga0} For any $\sigma\in\St\otimes\St\otimes
k^{2(p-1)\rho}$, $\ga_0(\sigma)(Fr_*\calI_B)\subset\calI_{B}$.
Furthermore, $\ga_0(v_-\otimes v_-\otimes v_+\otimes v_+)$ is a
splitting section of $G$.
\end{lem}
\begin{proof}Let $U_-B\subset G$ be the open subset of $G$. It is enough to
prove the lemma over $U_-B$. Let us choose a system of homogenous
coordinates $\{x_\alpha,\alpha\in\Delta_+\}$ (resp.
$\{y_\alpha,-\alpha\in\Delta_+$\}) for $U_-$ (resp. for $U$). And
let $t_i$ be the $i$th fundamental weight of $T$.

By construction, $\ga_0(v\otimes w\otimes v_+\otimes
v_+)=(v,gw)(v_+,gv_+)i(1)^{1-p}$. Since $i(1)$ is the unique (up
to scalar) nonzero invariant differential form on $G$,
\[i(1)|_{U_-B}=(\prod_it_i)^{-1}dxdydt\quad \mbox{ up to a scalar}.\] On the
other hand, it is clear that the function $g\mapsto (v_+,gv_+)$
and $g\mapsto (v_-,gv_-)$ restricted to $U_-B$ has the form
\[\begin{split}&f_1(x)(\prod t_i)^{p-1}, f_1(x)\in k[x_\alpha,\alpha\in\Delta_+],\\
& f_2(y)(\prod t_i)^{-(p-1)}, f_2(y)\in
k[y_\alpha,-\alpha\in\Delta_+].\end{split}\] Therefore,
\[\ga_0(v\otimes w\otimes v_+\otimes
v_+)|_{U_-B}=(v,gw)f_1(x)(\prod_i t_i)^{2(p-1)}(dxdydt)^{1-p}, \]
and in particular
\[\ga_0(v_-\otimes v_-\otimes v_+\otimes
v_+)|_{U_-B}=f_2(y)f_1(x)(\prod_i t_i)^{(p-1)}(dxdydt)^{1-p}.\]

Since the $T$-weight of the function $f_1(x)$ is $2(p-1)\rho$, the
monomial
\[x^a=\prod_{\al\in\Delta_+}x_\al^{a_\al},\quad a\in\bbZ_{\geq 0}^{\dim U_-}\]
appearing in $f_1(x)$ will be of the following two forms: either
$\exists\alpha$, s.t. $a_\alpha\geq p$, or $a_\alpha=p-1$ for all
$\alpha\in\Delta_+$. Therefore, according to Lemma \ref{criterion of
splitting}, $\ga_0(v\otimes w\otimes v_+\otimes
v_+)(Fr_*\calI_B|_{U_-B})\subset\calI_{B}|_{U_-B}$. The first
statement of the lemma is proved. Next, it is shown in the proof of
\cite[Lemma 5]{KLT} that the monomial $x^{\underline{p-1}}$ appears
in $f_1(x)$ and none monomials of the form
$x^{\underline{p-1}+pa},a\neq 0\in\bbZ_{\geq 0}^{\dim U_-}$ appear
in $f_1(x)$. Similar results hold for $f_2(y)$. Then by Lemma
\ref{criterion of splitting} again, $\ga_0(v_-\otimes v_-\otimes
v_+\otimes v_+)|_{U_-B}$ gives a splitting of $U_-B$. The second
statement of the lemma follows.
\end{proof}

\medskip

(vi) Finally, let us see how $\ga_0$ gives the desired map as in
(\ref{B-map}). Since the characteristic of $p$ is assumed to be
very good, there is a $G$-equivariant map $\varphi:G\to\frakg$
which sends the unit $e\in G$ to the origin $0\in\frakg$, and induces the identity map
$(D\varphi)_e=\id: T_eG\to T_0\frakg$ (cf. \cite[9.3.2-9.3.3]{BR}).
\begin{lem}\label{B to b}
Such a map will necessarily map $B$ to $\frakb$.
\end{lem}
\begin{proof}Let $2\check{\rho}$ be the sum of positive coroots of $T$. Then we have a morphism $2\check{\rho}:\bbG_m\to T$. We assume that $\bbG_m\subset\bbA^1$ naturally.
Then $\frakb\subset\frakg$ has the following characterization. Let $x\in\frakg(R)$ be any $R$-point of $\frakg$, where $R$ is a $k$-algebra. If the map $(\bbG_m)_R\to\frakg_R$ defined by $t\mapsto \Ad(2\check{\rho}(t))x$ extends to $\bbA^1$, then $x\in\frakb(R)$. Now, the conjugation of $2\check{\rho}(\bbG_m)$ on $B$ extends to a morphism $\bbA^1\times B\to B$. Therefore, by the $G$-equivariance of $\varphi$, $\varphi$ will map $B$ to $\frakb$.
\end{proof}
Therefore, we obtain the following commutative diagram
of $B$-modules
\[\begin{CD}
0@>>>\hat{\calI}_{B,e}@>>>\hat{\calO}_{G,e}@>>>\hat{\calO}_{B,e}@>>>0\\
@.@V\cong VV@V\cong VV@V\cong VV@.\\
0@>>>\hat{\calI}_{\frakb,0}@>>>\hat{\calO}_{\frakg,0}@>>>\hat{\calO}_{\frakb,0}@>>>0.
\end{CD}\]

Now by Lemma \ref{compatible with completions}, we obtain a map of
$B$-modules
\[\begin{split}\St\otimes\St\otimes k^{2(p-1)\rho}\to&\Hom(Fr_*\calO_G,\calO_{G'})\to\Hom(\hat{\calO}_{G,e},\hat{\calO}_{G',e})\\
\cong&\Hom(\hat{\calO}_{\frakg,0},\hat{\calO}_{\frakg',0})\cong Fr_*\hat{\omega}^{1-p}_{\frakg,0}.\end{split}\]
where $\hat{\omega}_{\frakg,0}$ denotes the completion of
$\omega_{\frakg}$ at $0$. The unique
up to scalar isomorphism $\omega_\frakg\to\calO_\frakg$ induces a
$G$-module isomorphism up to scalar
$Fr_*\hat{\omega}^{1-p}_{\frakg,0}\to Fr_*\hat{\calO}_{\frakg,0}\cong\prod_nS^n\frakg^*$.
Therefore, we obtain a map $\ga:\St\otimes\St\otimes
k^{2(p-1)\rho}\to S^d\frakg^*$ by composition of above maps
followed by the projection $\prod_nS^n\frakg^*\to S^d\frakg^*$. By
Lemma \ref{compatible with
completions}, the first part of Lemma \ref{property of ga0}, and the
definition of $J$ (cf. \eqref{J}), we know the image of $\ga$ indeed
lies in $J^d$. By Lemma \ref{compatible with
completions}, and the second part of Lemma \ref{property of ga0}, $\ga(v_-\otimes v_-\otimes
v_+\otimes v_+)$ is a splitting section of $\frakg$, and therefore by Lemma \ref{criterion of splitting}, $(\varepsilon\circ\ga)(v_-\otimes v_-\otimes
v_+\otimes v_+)\neq 0$ (see \eqref{ep} for the definition of
$\varepsilon$).
\end{proof}

\begin{rmk}The existence of Frobenius splitting of $\tilde{\frakg}$ is already
proved (cf. \cite{MvdK} Theorem 3.8 for type $A$ case and cf.
\cite{KLT} Remark 1 for general case). However, to my knowlegdge, the existence of Frobenius splitting of $\tilde{\frakg}_\calP$ is not known before. Our approach is largely inspired by
\cite{KLT}.
\end{rmk}

\subsection{Proof of Theorem \ref{surjectivity,family version}} Now we will deduce
Theorem \ref{surjectivity,family version} as a consequence of the
compatibly Frobenius splitting. First we still assume that
everything is defined over $\bar{\bbF}_p$, with $p$ being very good.
\begin{prop}\label{surj for p good}
(i) The natural map
\[\Gamma(\calP_\la\times\frakg,\calO_{\calP_\la\times\frakg}(\la))\to\Gamma(\tilde{\frakg}_\la,\calO_{\tilde{\frakg}_\la}(\la))\]
is surjective.

(ii) We have
\[H^1(\calP_\la\times\frakg,\calO_{\calP_\la\times\frakg}(\la))=H^1(\tilde{\frakg}_\la,\calO_{\tilde{\frakg}_\la}(\la))=0.\]
\end{prop}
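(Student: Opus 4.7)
The plan is to deduce the proposition from the compatible Frobenius splitting of Theorem \ref{Frob. splitting} via the standard Mehta--Ramanathan Frobenius iteration argument, adapted to the present non-projective setting by invoking relative Serre vanishing for the projection $p:X\to\frakg$, where $X:=\calP_\la\times\frakg$. Let $Y:=\tilde{\frakg}_\la$, $\calL:=\calO_X(\la)$ (the pullback of the ample line bundle on $\calP_\la$), and $\calI:=\calI_Y\subset\calO_X$ the ideal sheaf of $Y$. The crucial facts are that $p$ is proper with affine target and that $\calL$ is $p$-ample.

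The main step is to prove $H^i(X,\calI\otimes\calL)=0$ for all $i>0$. Compatibility of the splitting $\varphi:Fr_*\calO_X\to\calO_X$ with $Y$ gives an induced splitting $Fr_*\calI\to\calI$ of the canonical map $\calI\to Fr_*\calI$. Tensoring with $\calL$ and using the projection formula $Fr_*\calI\otimes\calL\cong Fr_*(\calI\otimes\calL^p)$ yields a splitting of $\calI\otimes\calL\hookrightarrow Fr_*(\calI\otimes\calL^p)$. Passing to cohomology produces injections $H^i(X,\calI\otimes\calL)\hookrightarrow H^i(X,\calI\otimes\calL^p)$, and iterating $n$ times gives
\[H^i(X,\calI\otimes\calL)\hookrightarrow H^i(X,\calI\otimes\calL^{p^n}).\]
By relative Serre vanishing for the proper morphism $p$ with $p$-ample line bundle $\calL$, one has $R^ip_*(\calI\otimes\calL^{p^n})=0$ for $i>0$ and $n$ sufficiently large; since $\frakg$ is affine, this forces $H^i(X,\calI\otimes\calL^{p^n})=0$, and the desired vanishing follows.

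Parts (i) and (ii) then drop out of the long exact sequence attached to $0\to\calI\otimes\calL\to\calL\to i_*(\calL|_Y)\to 0$. The vanishing $H^1(X,\calL)=0$ is Kempf's vanishing (via $R^ip_*\calL\cong H^i(\calP_\la,\calO(\la))\otimes\calO_\frakg=0$ for $i>0$) or alternatively the same Frobenius argument with $\calI$ replaced by $\calO_X$; surjectivity of the restriction map in (i) is immediate from $H^1(X,\calI\otimes\calL)=0$; and $H^1(Y,\calL|_Y)=0$ follows by sandwiching it between $H^1(X,\calL)=0$ and $H^2(X,\calI\otimes\calL)=0$. The principal obstacle is that $X$ is not projective, so ordinary Serre vanishing does not apply directly; it is replaced by its relative version, which suffices thanks to the $p$-ampleness of $\calL$ and the affineness of $\frakg$, after which the argument becomes entirely formal.
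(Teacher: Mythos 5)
Your proof is correct, and it rests on the same two pillars as the paper's: the compatible Frobenius splitting of Theorem \ref{Frob. splitting} plus the Mehta--Ramanathan iteration, which reduces everything to a vanishing statement for high twists of $\calO(\la)$. Where you genuinely diverge is at the step the paper flags as needing ``a little extra work'' because $\calP_\la\times\frakg$ is not projective. You handle it by working over the affine base: relative Serre vanishing for the projective morphism $p:\calP_\la\times\frakg\to\frakg$ (for which $\calO(\la)$ is $p$-ample, being ample on the fibers) gives $R^ip_*(\calI\otimes\calO(p^n\la))=0$ for $n\gg 0$, and affineness of $\frakg$ upgrades this to $H^i=0$, killing the targets of the iterated injections $H^i(X,\calI\otimes\calL)\hookrightarrow H^i(X,\calI\otimes\calL^{p^n})$. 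The paper instead projects to the \emph{other}, projective factor via the affine morphism $\pi:\calP_\la\times\frakg\to\calP_\la$, rewrites the cohomology as cohomology on $\calP_\la$ of the (componentwise coherent) graded pushforwards, and resolves $\pi_*\calO_{\tilde{\frakg}_\la}$ by the Koszul complex $S\frakg^*\otimes\Omega^\bullet_{\calP_\la}$, so that ordinary Serre vanishing for $\Omega^q_{\calP_\la}(m\la)$, $m\gg 0$, finishes the argument. Your route is shorter and more formal; the paper's buys an explicit description of $\pi_*\calO_{\tilde{\frakg}_\la}$ via the conormal identification with $\Omega_{\calP_\la}$. Your packaging of the iteration around the split inclusion $\calI\otimes\calL\hookrightarrow Fr_*(\calI\otimes\calL^{p})$ (rather than around surjectivity of restriction for $m=p^n$) and your treatment of $H^1(X,\calL)$ by Kempf vanishing, or alternatively by the splitting of $X$ itself, are both fine.
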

\begin{proof}
Since the closed embedding
$\tilde{\frakg}_\la\to\calP_\la\times\frakg$ is compatibly Frobenius
splitting, by the standard argument (cf. \cite{MR}), it is enough
to prove that
\begin{equation}\label{mlarge}
\Gamma(\calP_\la\times\frakg,\calO_{\calP_\la\times\frakg}(m\la))\to\Gamma(\tilde{\frakg}_\la,\calO_{\tilde{\frakg}_\la}(m\la))
\end{equation}
is surjective, and
\begin{equation}\label{vanish}
H^1(\calP_\la\times\frakg,\calO_{\calP_\la\times\frakg}(m\la))=H^1(\tilde{\frakg}_\la,\calO_{\tilde{\frakg}_\la}(m\la))=0,
\end{equation}
for $m$ sufficiently large. Since $\calP_\la\times\frakg$ is not
projective, to prove this we need a little extra work.

Let $\pi:\calP_\la\times\frakg\to\calP_\la$ be the projection to the
first factor. Then the map \eqref{mlarge} is identified with
\[\Gamma(\calP_\la,\pi_*(\calO_{\calP_\la\times\frakg}(m\la)))\to\Gamma(\calP_\la,\pi_*(\calO_{\tilde{\frakg}_{\la}}(m\la))).\]
By the adjunction formula, the above map is the same as
\[\Gamma(\calP_\la,\pi_*(\calO_{\calP_\la\times\frakg})(m\la))\to\Gamma(\calP_\la,\pi_*(\calO_{\tilde{\frakg}_{\la}})(m\la)),\]
induced from
$S\frakg^*\otimes\calO_{\calP_\la}\otimes\calO(m\la)\to\pi_*\calO_{\tilde{\frakg}_{\la}}\otimes\calO(m\la)$.
It is clear that as $\calO_{\calP_\la}$-modules, there is an
quasi-isomorphism
$S\frakg^*\otimes\Omega^\bullet_{\calP_\la}\otimes\calO(m\la)\to\pi_*\calO_{\tilde{\frakg}_{\la}}\otimes\calO(m\la)$
(the Koszul resolution) extending the above morphism. Therefore,
there is a spectral sequence converging to
$H^{-p+q}(\calP_\la,\pi_*(\calO_{\tilde{\frakg}_{\la}})(m\la))$ with
\[E_1^{-p,q}=H^q(\calP_\la,\Omega_{\calP_\la}^p(m\la))\otimes S\frakg^*\]
Now since $\calO(\la)$ is ample, for $m$ sufficiently large,
$H^q(\calP_\la,\Omega_{\calP_\la}^p(m\la))=0$ for $q>0$. Therefore,
$E_1^{0,0}\twoheadrightarrow E_\infty^{0,0}$. This proves the
surjectivity of \eqref{mlarge} for $m$ sufficiently large. This
argument also implies \eqref{vanish} at the same time.
\end{proof}

Now we begin to prove Theorem \ref{surjectivity,family version}.
Therefore, in the rest paragraph of this subsection, everything is
defined over $\bbZ_S$. We want to show that
\[\Gamma(\calP_\la\times\frakg,\calO_{\calP_\la\times\frakg}(\la))\to\Gamma(\tilde{\frakg}_\la,\calO_{\tilde{\frakg}_\la}(\la))\]
is surjective. It is enough to show that
\[\Gamma(\calP_\la\times\frakg,\calO_{\calP_\la\times\frakg}(\la))\otimes\bar{\bbF}_p\to\Gamma(\tilde{\frakg}_\la,\calO_{\tilde{\frakg}_\la}(\la))\otimes\bar{\bbF}_p\]
is surjective for any $p\not\in S$ (i.e. $p$ is very good). By part
(i) of Proposition \ref{surj for p good}, it is enough to proof that
the canonical morphisms
\[\Gamma(\calP_\la\times\frakg,\calO_{\calP_\la\times\frakg}(\la))\otimes\bar{\bbF}_p\to \Gamma((\calP_\la\times\frakg)\otimes\bar{\bbF}_p,\calO_{\calP_\la\times\frakg}(\la)) \]
and
\[\Gamma(\tilde{\frakg}_\la,\calO_{\tilde{\frakg}_\la}(\la))\otimes\bar{\bbF}_p\to\Gamma(\tilde{\frakg}_\la\otimes\bar{\bbF}_p,\calO_{\tilde{\frakg}_\la}(\la))\]
are isomorphisms. Let us prove the latter isomorphism, since the
former is similar and even simpler. Again, let
$\pi:\calP_\la\times\frakg\to\calP_\la$ be the projection to the
first factor. Then it is equivalent to prove that
\[\Gamma(\calP_\la,\pi_*\calO_{\tilde{\frakg}_\la}(\la))\otimes\bar{\bbF}_p\to\Gamma(\calP_\la\otimes\bar{\bbF}_p,\pi_*\calO_{\tilde{\frakg}_\la}(\la))\]
is surjective. Observe that $\pi_*\calO_{\tilde{\frakg}_\la}(\la)$
is a direct sum of coherent sheaves on $\calP_\la$, each of which is
flat over $\bbZ_S$. By the standard base change theorem for
cohomology, it is enough to show that
$H^1(\calP_\la\otimes\bar{\bbF}_p,\pi_*\calO_{\tilde{\frakg}_\la}(\la))=0$
for every $p\not\in S$, which is the content of part (ii) of
Proposition \ref{surj for p good}. Therefore, Theorem
\ref{surjectivity,family version} is proved.

\subsection{Flatness of $p_*\calO_{\tilde{\frakg}_\la}$ over $\frakg^{reg}$}\label{flat}
Let $\frakg^{reg}$ denote the open subscheme of $\frakg$ consisting
of regular elements. That is, for any $\bbZ_S$-scheme $X$, $\frakg^{reg}(X)$ is the subset of $\frakg(X)$ such that for any point
$x\in X$, the composition $x\to
X\to\frakg$ is a regular element in $\frakg\otimes \kappa(x)$, where $\kappa(x)$ is the residue field of $x$. It is clear that for any very good prime $p$, $\frakg^{reg}\otimes\bar{\bbF}_p\cong\frakg^{reg}_{\bar{\bbF}_p}$.
\begin{prop}\label{freeness}
The restrictions of $p_*\calO_{\calP_\la\times\frakg}(\la)$ and $p_*\calO_{\tilde{\frakg}_\la}(\la)$ to
$\frakg^{reg}$ are locally free.
\end{prop}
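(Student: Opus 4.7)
The first assertion will be essentially formal. Since $p$ is the second projection from a product and $\calO(\la)$ is pulled back from $\calP_\la$, the projection formula will give
\[
p_*\calO_{\calP_\la\times\frakg}(\la)\cong \Gamma(\calP_\la,\calO(\la))\otimes_{\bbZ_S}\calO_\frakg\cong S^{-w_0(\la)}\otimes_{\bbZ_S}\calO_\frakg.
\]
Because the Schur module $S^{-w_0(\la)}$ is free as a $\bbZ_S$-module, this sheaf is in fact globally free on all of $\frakg$, and hence certainly locally free on $\frakg^{reg}$.

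The real content lies in the second sheaf. My plan is to deduce it from the claim that the restriction $q:\tilde{\frakg}_\la|_{\frakg^{reg}}\to\frakg^{reg}$ of the projection is finite and flat: granting this, $p_*\calO_{\tilde{\frakg}_\la}(\la)|_{\frakg^{reg}}$ is the pushforward of a line bundle along a finite flat morphism, which is always locally free.

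For finiteness I will use that $q$ is proper, inherited from the proper projection $\calP_\la\times\frakg\to\frakg$ via the closed embedding $\tilde{\frakg}_\la\hookrightarrow\calP_\la\times\frakg$, so it suffices to show each geometric fiber of $q$ over $\frakg^{reg}$ is zero-dimensional. The fiber over a geometric point $\xi\in\frakg^{reg}$ is the partial Springer fiber $(\calP_\la)_\xi=\{P'\in\calP_\la : \xi\in\Lie P'\}$, and it receives a surjection from the full Springer fiber $\calB_\xi=\{B'\in\calB : \xi\in\Lie B'\}$ by sending a Borel to the unique parabolic of type $\la$ containing it. The classical finiteness of $\calB_\xi$ for regular $\xi$ in any very good characteristic then forces $(\calP_\la)_\xi$ to be finite.

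For flatness I will invoke miracle flatness together with the fiberwise criterion. Both $\tilde{\frakg}_\la=G\times^{P_\la}\frakp_\la$ (a vector bundle over the smooth scheme $\calP_\la$) and $\frakg$ are smooth over $\bbZ_S$ of relative dimension $\dim\frakg$, and $\frakg^{reg}\subset\frakg$ is smooth as an open subscheme. Over each geometric point of $\Spec\bbZ_S$, the induced morphism is then a finite morphism between smooth equidimensional varieties of the same dimension, hence flat by miracle flatness; the fiberwise criterion for flatness then upgrades this to flatness of $q$ over $\bbZ_S$. The only real obstacle in the whole plan is the geometric fiber-dimension count, which as above rests on the well-known structure of Springer fibers over the regular locus.
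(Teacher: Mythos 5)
Your proof is correct, and for the essential point it takes a genuinely different (and shorter) route than the paper. The paper does not attack the finiteness and flatness of $q:\tilde{\frakg}_\la^{reg}\to\frakg^{reg}$ head-on: it first constructs a map $\tilde{\frakg}_\la\to\frakt/\!\!/W_\la$ through the Levi quotient (proving along the way that $\frakt/\!\!/W_\la\cong\frakl_\la/\!\!/L_\la$), then shows that the resulting square \eqref{alteration} over $\chi:\frakg\to\frakt/\!\!/W$ is Cartesian over $\frakg^{reg}$, and deduces local freeness because $\frakt/\!\!/W_\la\to\frakt/\!\!/W$ is finite and faithfully flat by Demazure's theorem. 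Amusingly, to prove the Cartesian property the paper uses exactly your ingredients (proper plus quasi-finite implies finite; a finite map of smooth equidimensional schemes is flat; generic identification over the regular semisimple locus), so your argument is in effect an abbreviation that skips the $\frakt/\!\!/W_\la$ detour entirely and replaces the paper's ``same rank on each $\bar{\bbF}_p$-fiber'' bookkeeping with the cleaner fiberwise flatness criterion over $\bbZ_S$. What the longer route buys the paper is the identification $\tilde{\frakg}_\la^{reg}\cong\frakg^{reg}\times_{\frakt/\!\!/W}\frakt/\!\!/W_\la$, which is needed immediately afterwards (in Proposition \ref{isom} and in the generic-point step of the proof of Theorem \ref{Main1}) to compute the generic rank $|W/W_\la|$; your argument yields local freeness but not that rank. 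Two points you should make explicit: the quasi-finiteness must be checked on the characteristic-zero fiber of $\Spec\bbZ_S$ as well as on the very good closed fibers (it holds, by the same Springer-fiber dimension formula), and the surjectivity of $\calB_\xi\to(\calP_\la)_\xi$ rests on the standard fact that every element of the Levi quotient $\frakp'/\frakn'$ lies in a Borel subalgebra; both are standard, as you indicate.
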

\begin{rmk}However, $p_*\calO_{\tilde{\frakg}_\la}(\la)$ is not flat over $\frakg$.
\end{rmk}
\begin{proof}It is clear that $p_*\calO_{\calP_\la\times\frakg}(\la)$ is free on $\frakg$. Therefore, we concentrate on $p_*\calO_{\tilde{\frakg}_\la}(\la)$. It is enough to prove that $p_*\calO_{\tilde{\frakg}_\la\otimes\bar{\bbF}_p}(\la)$ is locally free over $\frakg^{reg}_{\bar{\bbF}_p}$ of the same rank. Therefore, we base change everything to $\bar{\bbF}_p$ without changing the notation, where $p$ is a very good prime of $G$.

Let $L_\la\subset P_\la\subset G$ be the standard Levi
subgroup of $P_\la$. Then the Weyl group of $L_\la$ is $W_\la$, the
stablizer of $\la$ in $W$. We construct a map
$\tilde{\frakg}_\la\to\frakt/\!\!/W_\la=\Spec (S\frakt^*)^{W_\la}$.
Namely, let $U_\la$ be the unipotent radical of $P_\al$ and
$\frakn_\la$ be the nilpotent radical of $\frakp_\la$. Then
$P_\la/U_\la\cong L_\la$ and
$\frakp_\la/\frakn_\la\cong\frakl_\la=\Lie L_\la$. We thus obtain
the following map
\begin{equation}\label{t/w}
G\times^{P_\la}\frakp_\la\to G\times^{P_\la}(\frakp_\la/\frakn_\la)\cong (G/U_\la)\times^{L_\la}\frakl_\la\to [\frakl_\la/L_\la]\to\frakt/\!\!/W_\la,
\end{equation}
where $[\frakl_\la/L_\la]$ is the stack quotient, which maps
naturally to $\frakl_\la/\!\!/L_\la=\Spec (S\frakl_\la^*)^{L_\la}$. To justify the morphism $[\frakl_\la/L_\la]\to\frakt/\!\!/W_\la$, we need to show that the natural map $\frakt/\!\!/W_\la\to\frakl_\la/\!\!/L_\la$ is an isomorphism.

This fact may be well known to experts. However, since we cannot locate a reference, we include a proof here.  It is well known that $\calO_{L_\la}^{L_\la}\cong \calO_T^{W_\la}$. Since the characteristic is very good, according to \cite[9.3.2-9.3.3]{BR}, there is a $G$-equivariant morphism $\varphi:G\to\frakg$ sending the unit $e\in G$ to the origin $0\in\frakg$ and $(D\varphi)_e=\on{id}$. One can argument as in Lemma \ref{B to b} that it must map $L_\la$ to $\frakl_\la$ in an $L_\la$-equivariant way and $T\to \frakt$ in a $W$-equivariant way. We thus obtain the following commutative diagram
\[\xymatrix{T/\!\!/W_\la\ar[d]\ar^\cong[r]&L_\la/\!\!/L_\la\ar[d]\\
\frakt/\!\!/W_\la\ar[r]&\frakl_\la/\!\!/L_\la
}\]
By Theorem 4.1 of \emph{loc. cit.}, the vertical morphisms in the above diagram are \'{e}tale around the unit $e$. Therefore, $\frakt/\!\!/W_\la\to\frakl_\la/\!\!/L_\la$ is \'{e}tale around the origin $0$. Observe that both $(S\frakt^*)^{W_\la}$ and $(S\frakl_\la^*)^{L_\la}$ are positive graded and the map respects the grading. Therefore, $(S\frakl_\la^*)^{L_\la}\cong(S\frakt^*)^{W_\la}$.

We come back to the proof of the proposition. It is easy to see that the map \eqref{t/w} gives rise to the following
commutative diagram
\begin{equation}\label{alteration}\begin{CD}\tilde{\frakg}_{\lambda}@>>> \frakt/\!\!/W_\lambda\\
                @VpVV@VVV\\
                \frakg@>\chi>>\frakt/\!\!/W,\end{CD}\end{equation}
where $\chi:\frakg\to\frakt/\!\!/W$ is the Chevalley map as in Proposition \ref{Chevalley}. Now, the proposition is the consequence of the following two facts
(i) the projection $\frakt/\!\!/W_\la\to\frakt/\!\!/W$ is faithfully
flat over $\bbZ_S$; and (ii) over $\frakg^{reg}$, the above diagram
is Cartesian.

To see (i), we use the result of \cite{De} that the projections $\frakt/\!\!/W_\la$ and
$\frakt\to\frakt/\!\!/W$ are faithfully flat. Therefore,
$\frakt/\!\!/W_\la\to\frakt/\!\!/W$ is also faithfully flat. To see
(ii), let $\tilde{\frakg}_{\lambda}^{reg}=p^{-1}(\frakg^{reg})$. We want to show that
\[\tilde{\frakg}_{\lambda}^{reg}\to\frakg^{reg}\times_{\frakt/\!\!/W}\frakt/\!\!/W_\la\]
is an isomorphism. Since it is proper and quasi-finite, it is finite. Since both schemes are smooth\footnote{The smoothness of $\frakg^{reg}\times_{\frakt/\!\!/W}\frakt/\!\!/W_\la$ follows from the smoothness of $\chi:\frakg^{reg}\to\frakt/\!\!/W$, see \cite[Chap. II, 3.14]{Sl}.}, it is flat. Finally, the assertion follows from the fact that this map is an isomorphism over $\frakg^{reg,ss}\times_{\frakt/\!\!/W}\frakt/\!\!/W_\la$, where $\frakg^{reg,ss}$ is the open subscheme of $\frakg^{reg}$ consisting of regular semi-simple elements.
\end{proof}

An easy application of the above proof is

\begin{prop}\label{isom}
The surjective homomorphism
\begin{equation}\label{surjj}\Gamma(\calP_\la\times\frakg,\calO_{\calP_\la\times\frakg}(\la))\twoheadrightarrow\Gamma(\tilde{\frakg}_{\la},\calO_{\tilde{\frakg}_\la}(\la))\end{equation}
is an isomorphism if and only if $\lambda$ is zero or a minuscule
weight of $G$ w.r.t $B$.
\end{prop}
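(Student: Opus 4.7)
The plan is a generic rank comparison combined with a torsion-freeness argument, with Proposition~\ref{freeness} as the key input. Surjectivity of \eqref{surjj} is already Theorem~\ref{surjectivity,family version}, so only injectivity remains to be characterized, and this can be done by comparing the generic ranks of the two $\calO_\frakg$-modules.

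First I would identify the two ranks. The source $\Gamma(\calP_\la\times\frakg,\calO_{\calP_\la\times\frakg}(\la))\cong S^{-w_0(\la)}\otimes\calO_\frakg$ is a free $\calO_\frakg$-module whose rank, equal to the $\bbZ_S$-rank of the Schur module $S^{-w_0(\la)}$, coincides with $\dim V^\la$ in characteristic zero. By Proposition~\ref{freeness}, the target $p_*\calO_{\tilde{\frakg}_\la}(\la)$ is locally free over $\frakg^{reg}$; its rank is most easily read off over the regular semisimple locus, where $\tilde{\frakg}_\la\to\frakg$ is \'etale with fiber over a point $x$ equal to the reduced set $\{\frakp'\in\calP_\la:x\in\frakp'\}$ of cardinality $|W/W_\la|$, each point contributing a one-dimensional fiber of $\calO(\la)$. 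Thus the generic rank of the target is $|W/W_\la|$.

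For the ``only if'' direction, if \eqref{surjj} is an isomorphism then the two generic ranks must coincide, giving $\dim V^\la=|W\la|$ (since $|W/W_\la|=|W\la|$). The Weyl-orbit weights of $V^\la$ each occur with multiplicity one and already account for $|W\la|$ dimensions, so this equality forces every weight of $V^\la$ to lie in $W\la$; by the classical characterization of minuscule weights (equivalently $\langle\la,\al^\vee\rangle\in\{0,1\}$ for every positive root $\al$), this occurs precisely when $\la$ is zero or minuscule.

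For the converse, suppose $\la$ is zero or minuscule, so the two generic ranks agree. Then the kernel $K$ of the surjection \eqref{surjj} vanishes over $\frakg^{reg}$, being the kernel of a surjection between locally free sheaves of the same rank. But $K$ is a submodule of the free, hence torsion-free, $\calO_\frakg$-module $S^{-w_0(\la)}\otimes\calO_\frakg$, and the open subscheme $\frakg^{reg}\subset\frakg$ contains the generic point, so torsion-freeness forces $K=0$. Modulo Proposition~\ref{freeness} and the rank computation on the regular semisimple locus, the rest is formal and there is no serious obstacle.
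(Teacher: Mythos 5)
Your proposal is correct and follows essentially the same route as the paper: compare the generic rank $|W/W_\la|$ of $p_*\calO_{\tilde{\frakg}_\la}(\la)$ (the paper reads it off from the Cartesian diagram \eqref{alteration} over $\frakg^{reg}$, you count fibers over the regular semisimple locus, which is the same computation) with the rank of the free module $S^{-w_0(\la)}\otimes\calO_\frakg$, characterize equality by minusculity, and conclude injectivity from torsion-freeness of the source. The only difference is that you spell out the weight-multiplicity argument for ``rank $=|W\la|$ iff minuscule or zero,'' which the paper leaves implicit.
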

\begin{proof} If $\lambda=0$,
everything is clear. So we assume that $\lambda$ is not zero in
the following. If $\lambda$ is minuscule, then the morphism of
$\calO_\frakg$-modules \eqref{surjj}
is an isomorphism over the generic point of $\frakg$ by the
following reason. Since the diagram \eqref{alteration} is Cartensian over $\frakg^{reg}$ (one can see easily that the diagram \eqref{alteration} descends to $\bbZ_S$), over the generic point of $\frakg$, $p_*\calO_{\tilde{\frakg}_{\lambda}}(\lambda))$ has rank $|W/W_\la|$, which is the same as the generic rank of $p_*\calO_{\calP_\la\times\frakg}(\la)$ since $\la$ is minuscule. Therefore, the kernel of \eqref{surjj}
is a torsion module over $\calO_\frakg$. However, since
$p_*\calO_{\calP_\la\times\frakg}(\lambda)$ is a free
$\calO_\frakg$-module, the kernel must be zero.

Conversely, if \eqref{surjj}
is an isomorphism. Then the $\bbZ_S$-rank of $\Gamma(\calP_\la,\calO(\la))$, which is the same as the rank of
$p_*\calO_{\calP_\la\times\frakg}(\la)$ as an $\calO_\frakg$-module, is $|W/W_\la|$.
The can happen only when $\lambda$ is minuscule.
\end{proof}

This proposition gives another characterization of minuscule
weights of $\frakg$, which may be generalized to the Kac-Moody algebras.

\section{Proof of Theorem \ref{Main1}}\label{proof II}
\subsection{Regular centralizer}\label{regular}
In this subsection, we review the regular centralizer group scheme of $G$. The regular centralizer group scheme is well-known when $G$ is an algebraic group over a field. We just write down its counterpart for $G$ being a Chevalley group scheme over $\bbZ_S$.

The centralizer group scheme $I$ over $\frakg$ by definition is the group scheme that fits into the following Cartesian diagram
\[\xymatrix{I\ar[r]\ar[d]&\frakg\ar^{\Delta}[d]\\
G\times\frakg\ar^{\Ad\times 1}[r]&\frakg\times\frakg.
}\]
It is easy to see that $\frakg^{reg}$ as defined in \S \ref{flat} is the open subscheme of $\frakg$ over which the fibers of $I$ have the minimal dimension. It is known that $I|_{\frakg^{reg}}$ is commutative. The following proposition is known when the base is a field of very good characteristic (cf. \cite{De,Sl,Ng}). Easy argument will imply that it also hold when the base is $\bbZ_S$.
\begin{prop}\label{Chevalley}
\begin{enumerate}
\item The natural map $\calO_{\frakg}^{G}\to \calO_{\frakt}^W$ is an isomorphism, and they are both isomorphic to a polynomial algebra over $\bbZ_S$. Denote $\frakt/\!\!/W=\Spec\ \calO_{\frakt}^W$, and let $\varpi:\frakt\to\frakc$, $\chi:\frakg\to\frakc$ be the natural projections. Then both $\varpi, \chi$ are faithfully flat. In addition, the restriction $\chi|_{\frakg^{reg}}$ is smooth.
\item There is a (unique up to isomorphism) smooth commutative group scheme $J$ over $\frakc$, such that
\[
\chi^*J|_{\frakg^{reg}}\cong I|_{\frakg^{reg}}.
\]
In literature, $J$ is usually called the regular centralizer group scheme of $G$.
\end{enumerate}
\end{prop}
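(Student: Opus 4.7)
The plan is to bootstrap the known field-case statements in \cite{De, Sl, Ng} to the base $\bbZ_S$ by combining fiberwise results with flatness arguments. The hypotheses on $S$ guarantee that every residue field of $\bbZ_S$ has characteristic either $0$ or a very good prime of $G$, so the proposition is already available over every such residue field.

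For part (1), I would first observe that $\calO_\frakt^W$ is a polynomial $\bbZ_S$-algebra of rank $\on{rk}(G)$ in generators of the standard Weyl-group degrees, which is Demazure's theorem once the torsion primes of $W$ acting on the coroot lattice have been inverted (these are precisely the primes collected in $S$). In particular $\frakc$ is smooth over $\bbZ_S$. Next, the restriction map $\calO_\frakg^G \to \calO_\frakt^W$ is a $\bbZ_S$-linear map of graded $\bbZ_S$-flat modules (the source is flat since $G$ is smooth over $\bbZ_S$ acting on the flat $\bbZ_S$-algebra $\calO_\frakg$); by the field case it becomes an isomorphism after tensoring with every residue field of $\bbZ_S$, hence is an isomorphism over $\bbZ_S$ by fiberwise Nakayama. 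Flatness of $\varpi$ is then the known fact that $\calO_\frakt$ is free of rank $|W|$ over $\calO_\frakt^W$ (compare Hilbert series using the Demazure description), and flatness of $\chi$ is deduced from the iso $\calO_\frakg^G \cong \calO_\frakt^W$ combined with fiberwise equidimensionality of $\chi$ and miracle flatness over the smooth base $\frakc$. Smoothness of $\chi|_{\frakg^{reg}}$ follows by a fiberwise check against \cite[II.3.14]{Sl}, combined with flatness over $\bbZ_S$.

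For part (2), the most efficient route is via a Kostant section $\kappa : \frakc \to \frakg^{reg}$ defined over $\bbZ_S$. I would construct such a section by choosing a principal $\mathfrak{sl}_2$-triple $(e,h,f)$ over $\bbZ_S$ (the regular nilpotent $e = \sum_{\alpha \text{ simple}} e_\alpha$ is defined integrally, and the partners $h, f$ are defined once $S$ has been inverted), and setting $\kappa$ to be the composition $\frakc \cong e + \frakg^f \hookrightarrow \frakg^{reg}$, where the identification $\frakc \cong e+\frakg^f$ is a map of flat $\bbZ_S$-schemes that becomes an isomorphism on every residue field and is therefore an isomorphism integrally. Define $J := \kappa^* I$; fiberwise inspection shows $J$ is smooth and commutative. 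To produce the required isomorphism $\chi^* J|_{\frakg^{reg}} \cong I|_{\frakg^{reg}}$, I would use commutativity of $I|_{\frakg^{reg}}$ together with $G$-equivariance: the two smooth commutative $\frakg^{reg}$-group schemes $\chi^*J$ and $I|_{\frakg^{reg}}$ agree along $\kappa$, and descent along the smooth surjection $\chi|_{\frakg^{reg}}$ extends this to a canonical global iso provided one has descent data, which is supplied by $G$-transitivity on the regular fibers of $\chi$.

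The main obstacle sits inside this last descent step. Over each field of very good characteristic, commutativity of $I|_{\frakg^{reg}}$ together with $G$-transitivity on regular fibers of $\chi$ makes the identification $\chi^*\kappa^*I \cong I|_{\frakg^{reg}}$ canonical; the task integrally is to check that these fiberwise canonical identifications assemble to an integral isomorphism. This is handled by noting that both sides are $\bbZ_S$-flat group schemes over $\frakg^{reg}$ and that the isomorphism is described as a morphism of flat sheaves uniquely characterized by its behavior along $\kappa$ and by $G$-equivariance, so it is automatically defined over $\bbZ_S$.
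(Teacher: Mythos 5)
First, a point of comparison: the paper does not actually prove this proposition. It states that the result ``is known when the base is a field of very good characteristic (cf.\ [De, Sl, Ng])'' and asserts that an ``easy argument'' extends it to $\bbZ_S$. Your proposal is therefore an attempt to supply the omitted argument, and its overall architecture --- Demazure's theorem for $\calO_\frakt^W$, a fiberwise reduction for the Chevalley restriction map, and a Kostant section plus descent along $\chi|_{\frakg^{reg}}$ for $J$, exactly as in Ng\^o --- is the intended one. But two of your steps have genuine gaps.

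In part (1), the ``fiberwise Nakayama'' step silently assumes that formation of $G$-invariants commutes with base change, i.e.\ that $\calO_\frakg^G\otimes k\to(\calO_{\frakg}\otimes k)^{G_k}$ is an isomorphism for every residue field $k$ of $\bbZ_S$. This is not automatic (taking invariants is only left exact), and without it the field case of the Chevalley restriction theorem says nothing about the reduction of the integral map $\calO_\frakg^G\to\calO_\frakt^W$. Injectivity can be obtained directly (a $G$-invariant function vanishing on $\frakt$ vanishes on the fiberwise dense locus $G\cdot\frakt^{rs}$, hence on $\frakg$ since $\calO_\frakg$ is $\bbZ_S$-torsion-free), but surjectivity over $\bbZ_S$ needs a separate argument --- e.g.\ producing the generators of $\calO_\frakt^W$ as restrictions of integral invariants, or first establishing flatness of $\calO_\frakg^G$ and comparing graded ranks. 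You should either close this or restructure so that $\frakc$ is \emph{defined} as $\frakt/\!\!/W$ and the identification with $\frakg/\!\!/G$ is proved rather than assumed fiberwise.

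In part (2), the specific Kostant section you propose does not exist over $\bbZ_S$. The principal triple completing $e=\sum_\alpha e_\alpha$ necessarily has $h=2\check\rho$ and $f=\sum_\alpha c_\alpha f_\alpha$ with $c_\alpha$ the coefficients of $2\check\rho$ in the simple coroots, and these coefficients are divisible by primes \emph{not} in $S$: in type $A_2$ one has $c_1=c_2=2$, while $S=\{3\}$, so $f\equiv 0 \pmod 2$ even though $2$ is a very good prime. Consequently $\frakg^f$ does not reduce to the centralizer of a regular nilpotent in that fiber, and the claim that $\frakc\cong e+\frakg^f$ ``becomes an isomorphism on every residue field'' fails for this choice. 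The repair is to replace $\frakg^f$ by an arbitrary free $\bbZ_S$-complement $\frakv$ of $[e,\frakg]$ (which is a subbundle of $\frakg$ because $e$ is regular nilpotent in every fiber) and to check fiberwise that $e+\frakv\to\frakc$ is an isomorphism in very good characteristic, or to invoke an integral Kostant section from the literature. Your final descent step is essentially right but is better phrased not as gluing fiberwise identifications: one equips $I|_{\frakg^{reg}}$ with the fppf descent datum along $\frakg^{reg}\to\frakc$ given by conjugation (well defined because $I|_{\frakg^{reg}}$ is commutative and $G\times\frakg^{reg}\to\frakg^{reg}\times_\frakc\frakg^{reg}$ is smooth and surjective); this works uniformly over $\bbZ_S$ and also gives the uniqueness of $J$, with no fiberwise bookkeeping needed.
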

It is clear from the definition that $[\frakg^{reg}/G]\cong \bbB J$ as stacks over $\frakc$. We have the following natural functor from the category of $G$-equivariant coherent sheaves on $\frakg$ to the category of $J$-modules
\begin{equation}\label{fun}\on{Coh}([\frakg/G])\to\on{Coh}([\frakg^{reg}/G])\cong\on{Coh}(\bbB J)\cong J\on{-mod}.
\end{equation}
In concrete terms, this means that a $G$-equivariant coherent sheaf on $\frakg^{reg}$ descends to a coherent sheaf on $\frakc$ by faithfully flat descent. In addition, such sheaf carry on a natural $J$-action. Let
\begin{equation}\label{Schur}
\calS^{-w_0(\la)}\to\calL_*^{-w_0(\la)}
\end{equation}
be the $J$-module morphism that is the image of \eqref{sur} under this functor. By Proposition \ref{freeness}, as coherent sheaves on $\frakc$, both $\calS^{-w_0(\la)}$ and $\calL_*^{-w_0(\la)}$ are locally free. Let us denote $J$-module map dual to \eqref{Schur} by
\begin{equation}\label{Weyl}
\calL_!^\la\to\calW^\la.
\end{equation}

The following lemma is a consequence of the fact that there is a $G$-module morphism $W^\la\to S^\la$, where $W^\la=\Gamma(\calP_\la,\calO(\la))^*$ is the Weyl module and $S^\la=\Gamma(\calP_{-w_0(\la)},\calO(-w_0(\la))$ is the Schur module.
\begin{lem}There is a natural $J$-module morphism $\calW^\la\to\calS^\la$.
\end{lem}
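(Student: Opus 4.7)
The plan is to exhibit the map $\calW^\la\to\calS^\la$ as the image, under the tautological functor \eqref{fun}, of a $G$-equivariant morphism of coherent sheaves on $\frakg$, bootstrapping from the classical $G$-module map $\phi\colon W^\la\to S^\la$ indicated just before the lemma. (The sheaf $\calS^\la$ is defined exactly as $\calS^{-w_0(\la)}$ was defined in the paper, but with $-w_0(\la)$ in place of $\la$: it is the $J$-module obtained by applying \eqref{fun} to $p_*\calO_{\calP_{-w_0(\la)}\times\frakg}(-w_0(\la))$.)

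First I would recall the construction of $\phi$: by the universal property of the Weyl module, giving a $G$-module map $W^\la\to M$ is the same as choosing a $B$-invariant weight-$\la$ vector in $M$. Applied to $M=S^\la=\Gamma(\calP_{-w_0(\la)},\calO(-w_0(\la)))$, the canonical highest weight section of the ample line bundle $\calO(-w_0(\la))$ supplies such a vector, and this pins down $\phi$ uniquely up to scalar. The construction is valid over $\bbZ_S$ by the standard representation theory of split Chevalley group schemes. Tensoring $\phi$ with $\calO_\frakg$ yields a $G$-equivariant morphism of free coherent sheaves on $\frakg$,
\[
\phi\otimes\id\colon W^\la\otimes\calO_\frakg\longrightarrow S^\la\otimes\calO_\frakg.
\]
Since the projection $p\colon\calP_\la\times\frakg\to\frakg$ is the second projection, one has $p_*\calO_{\calP_\la\times\frakg}(\la)\cong S^{-w_0(\la)}\otimes\calO_\frakg$, and similarly with $\la$ replaced by $-w_0(\la)$. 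Combined with the standard $G$-module duality $(S^{-w_0(\la)})^*\cong W^\la$, this identifies the source of $\phi\otimes\id$ with the $\calO_\frakg$-linear dual of $p_*\calO_{\calP_\la\times\frakg}(\la)$ and the target with $p_*\calO_{\calP_{-w_0(\la)}\times\frakg}(-w_0(\la))$.

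Finally, I would feed this $G$-equivariant morphism into \eqref{fun}: restrict to $\frakg^{reg}$ and descend along the smooth surjection $\chi|_{\frakg^{reg}}\colon\frakg^{reg}\to\frakc$. By construction the descent of $p_*\calO_{\calP_\la\times\frakg}(\la)$ is $\calS^{-w_0(\la)}$, and the descent of $p_*\calO_{\calP_{-w_0(\la)}\times\frakg}(-w_0(\la))$ is $\calS^\la$. Because $p_*\calO_{\calP_\la\times\frakg}(\la)$ is already free as an $\calO_\frakg$-module, dualization commutes with restriction to $\frakg^{reg}$ and with faithfully flat descent along $\chi$, so the descent of its $\calO_\frakg$-linear dual is the $J$-module $\calW^\la$ defined in the paper as the dual of $\calS^{-w_0(\la)}$. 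Consequently $\phi\otimes\id$ descends to a $J$-module morphism $\calW^\la\to\calS^\la$, as required. There is no serious obstacle here: the construction is formal once $\phi$ is in hand, and the only input that requires care is the existence and naturality of $\phi$ integrally over $\bbZ_S$, a standard fact about Weyl and Schur modules of split reductive group schemes.
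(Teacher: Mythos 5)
Your proposal is correct and is essentially the paper's own argument: the paper justifies the lemma in one sentence by noting it is a consequence of the $G$-module morphism $W^\la\to S^\la$, i.e., one tensors with $\calO_\frakg$ and applies the functor \eqref{fun}, exactly as you do. Your version merely spells out the (routine) compatibility of $\calO_\frakg$-linear duality of free sheaves with restriction to $\frakg^{reg}$ and descent, which the paper leaves implicit.
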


\subsection{Review of the equivariant homology of the affine Grassmannian}\label{homology}
Let $\Gr=G^\vee(F)/G^\vee(\calO)$ be the affine Grassmannian of
$G^\vee$. This is a union of projective varieties. In \cite{YZ}, the
(equivariant) homology of $\Gr$ with $\bbZ_S$-coefficients is
expressed as the algebraic functions on certain group schemes
associated to $G$. Let us briefly recall it.

First, let $f$ be the unique $W$-invariant quadratic form on $\frakt$ that takes $2$ on long coroots. Then $f$ gives rise to a $W$-equivariant isomorphism $f:\frakt^*\cong\frakt$ over $\bbZ_S$. Therefore, there is an isomorphism
\begin{equation}\label{H(BG)}
\Spec H^*(\bbB G^\vee)\cong \frakt^*/\!\!/W\cong\frakt/\!\!/W
\end{equation}
It is not hard to show that $\Spec H_*^{G^\vee(\calO)}(\Gr)$ is a commutative group scheme over $\frakt/\!\!/W$.

\begin{prop}(See \cite[Proposition 6.6]{YZ}.)There is a canonical isomorphism
\[\Spec\ H_*^{G^{\vee}(\calO)}(\Gr)\cong J.\]
\end{prop}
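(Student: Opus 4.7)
The plan is to compute $\Spec H_*^{G^\vee(\calO)}(\Gr)$ explicitly by equivariant localization and match it with a parallel presentation of $J$ after pullback to $\frakt$. Throughout I would use the quadratic form $f$ from \eqref{H(BG)} to identify $H^*(\bbB T^\vee) = S\frakt^*$ with $S\frakt = \calO_\frakt$, so that both group schemes naturally live over $\frakc = \frakt/\!\!/W$.

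First, I would analyze $H_*^{T^\vee}(\Gr)$ by torus-equivariant localization. The $T^\vee$-fixed points of $\Gr$ are precisely the points $t^\la$ indexed by $\la \in X_*(T^\vee) = X^*(T)$, and the one-dimensional $T^\vee$-orbits are $\bbP^1$'s joining $t^\la$ and $t^{\la + \al^\vee}$ for affine coroots $\al^\vee$. A Goresky--Kottwitz--MacPherson type argument then presents $H_*^{T^\vee}(\Gr)$ as the subalgebra of $\prod_\la H^*(\bbB T^\vee)$ cut out by divisibility conditions along each such edge, with its Hopf-algebra structure arising from the loop-group convolution on $\Gr$. Taking $W$-invariants (using that $N_{G^\vee}(T^\vee)$ acts on $\Gr$ compatibly) and invoking the Chevalley isomorphism from Proposition \ref{Chevalley} yields $\Spec H_*^{G^\vee(\calO)}(\Gr)$ as a commutative group scheme over $\frakc$. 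On the other side, by definition $J$ descends $I|_{\frakg^{reg}}$ along $\chi|_{\frakg^{reg}}$; picking a Kostant section and pulling back to $\frakt$ realizes $\varpi^* J$ as the subgroup scheme of $T \times \frakt$ consisting of pairs $(s,x)$ whose restriction along each root hyperplane $\al = 0$ is trivial to a specific order determined by $\al$. These vanishing conditions match the GKM divisibility relations term by term, giving a canonical isomorphism over $\frakt$, which then descends $W$-equivariantly to $\frakc$.

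The hard part will be verifying this integrally over $\bbZ_S$ rather than over $\bbQ$. Rationally, both presentations are classical; but over $\bbZ_S$ one must check that equivariant localization and the GKM-style presentation remain valid after inverting only the primes in $S$ (so that $H^*(\bbB G^\vee)$ is torsion-free and the Chevalley restriction map is an isomorphism, as in Proposition \ref{Chevalley}), and that the normalization constants entering the divisibility relations -- depending on coroot lengths, and in type $A_n$ on $n+1$ -- agree on both sides without introducing extra torsion. This is precisely why the statement is phrased over $\bbZ_S$: inverting the bad primes (and divisors of $n+1$ in type $A_n$) forces both the topological and the group-scheme-theoretic sides to behave uniformly, so that the edge-by-edge comparison assembles into the desired canonical isomorphism.
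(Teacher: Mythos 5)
This proposition is not proved in the paper at all: it is quoted verbatim from \cite[Proposition 6.6]{YZ}, so the comparison has to be with the argument there. Your route is genuinely different from the one in \cite{YZ}, and as it stands it has real gaps. The proof in \cite{YZ} does not proceed by a GKM presentation matched against a root-hyperplane description of $\varpi^*J$; its key construction is the map $e^T:\Spec H(\bbB T^\vee)\cong\frakt^*\to\frakg$ given by the $T^\vee$-equivariant Chern class of the determinant line bundle (the map appearing in diagram \eqref{dia} of this paper), together with the nontrivial fact that $e^T$ lands in $\frakg^{reg}$. One then identifies $\Spec H_*^{T^\vee}(\Gr)\cong (e^T)^*I$ and descends to $\frakc$ to get $\Spec H_*^{G^\vee(\calO)}(\Gr)\cong J$. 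This construction is what makes the isomorphism \emph{canonical}, and that canonicity is used essentially in \S\ref{proof main} (the base change along $\varpi\circ f$ and the localization argument at the generic point $\xi$). A Kostant section, as in your sketch, only produces an identification after choices.

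The concrete gaps in your sketch are these. First, the $T^\vee$-action on $\Gr$ (without the loop-rotation $\bbG_m$) is not GKM: through each fixed point $t^\la$ there are infinitely many $T^\vee$-invariant $\bbP^1$'s whose tangent weights all equal the same finite root $\al$, so the standard edge-by-edge divisibility description of the image of $H_*^{T^\vee}(\Gr^{T^\vee})\to H_*^{T^\vee}(\Gr)$ is not available; you would have to work with $T^\vee\times\bbG_m^{rot}$ and then specialize, which is a substantive extra step you do not address. Second, the sentence ``these vanishing conditions match the GKM divisibility relations term by term'' is precisely the content of the theorem and is asserted rather than argued; in particular nothing in your sketch compares the Hopf-algebra (convolution) structure on $H_*^{T^\vee}(\Gr)$ with the group structure of $I$ or $J$, which is the point of the statement. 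Third, the description of $\varpi^*J$ inside $T\times\frakt$ is by first-order triviality of $\al(s)$ along each wall (for $G$ simply connected), not ``to a specific order determined by $\al$''; the normalization issues you gesture at are exactly where the quadratic form $f$ of \eqref{H(BG)} and the hypotheses on $S$ enter, and they need to be pinned down, not just flagged.
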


Now if $\calF\in D_{G^\vee(\calO)}(\Gr)$, then $H_{G^\vee(\calO)}^*(\Gr,\calF)$ is a comodule over
$H^{G^\vee(\calO)}_*(\Gr)$, and therefore a module over $J$.

Let $T^\vee$ be the maximal torus of $G^\vee$. We should also review the $T^\vee$-equivariant homology of $\Gr$. According to \cite{YZ}, the $T^\vee$-equivariant Chern class of the determinantal line bundle of $\Gr$ gives rise to a map
$e^T:\Spec\ H(\bbB T^\vee)\cong\frakt^*\to\frakg^{reg}$ making the following diagram commute
\begin{equation}\label{dia}\xymatrix{\frakt^*\ar^{e^T}[r]\ar_f[d]&\frakg^{reg}\ar^\chi[d]\\
\frakt\ar^{\varpi}[r]&\frakc}
\end{equation}
\begin{prop}(See \cite[Theorem 6.1]{YZ}.) There is a canonical isomorphism of group schemes
\[\Spec H^{T^\vee}_*(\Gr)\cong (e^T)^*I.\]
\end{prop}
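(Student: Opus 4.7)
My plan is to (i) equip $\Spec H^{T^\vee}_*(\Gr)$ with the structure of a commutative group scheme over $\frakt^*=\Spec H^*(\bbB T^\vee)$, (ii) construct a natural morphism $\Phi\colon\Spec H^{T^\vee}_*(\Gr)\to (e^T)^*I$ of group schemes over $\frakt^*$, and (iii) prove $\Phi$ is an isomorphism by equivariant localization at the $T^\vee$-fixed points of $\Gr$.

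For steps (i) and (ii), the convolution product on $D_{G^\vee(\calO)}(\Gr)$ induces a commutative and cocommutative Hopf algebra structure on $H^{T^\vee}_*(\Gr)$ over $\calO_{\frakt^*}$, since the commutativity constraint of Satake convolution is fusion-theoretic in nature and survives restriction to $T^\vee$-equivariance. The forgetful map $H^{G^\vee(\calO)}_*(\Gr)\to H^{T^\vee}_*(\Gr)$ is then a Hopf algebra homomorphism, and by the preceding proposition yields a morphism
\[
\Spec H^{T^\vee}_*(\Gr)\ \longrightarrow\ J\times_{\frakc}\frakt^*
\]
of group schemes over $\frakt^*$. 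Combined with the identification
\[
(e^T)^*I=(\chi\circ e^T)^*J=(\varpi\circ f)^*J\cong J\times_{\frakc}\frakt^*,
\]
which follows from diagram \eqref{dia} together with $\chi^*J|_{\frakg^{reg}}\cong I|_{\frakg^{reg}}$, this produces the desired morphism $\Phi$.

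For step (iii), I would deploy equivariant localization. The $T^\vee$-fixed locus $\Gr^{T^\vee}$ is the discrete set $\{t^\mu:\mu\in X_*(T^\vee)\}$. Over the open subset $U\subset\frakt^*$ on which $e^T$ lands in $\frakg^{reg,ss}$ (the complement of the affine coroot hyperplanes, as pinned down by the Chern-class formula defining $e^T$), localization identifies $H^{T^\vee}_*(\Gr)|_U$ with the free $\calO_U$-module spanned by the classes $\{[t^\mu]\}$, while $(e^T)^*I|_U$ becomes the trivial torus $T^\vee\times U$, whose ring of functions is the group algebra $\calO_U[X_*(T^\vee)]$. The map $\Phi$ should send $[t^\mu]\mapsto \mu$ on the nose, giving an isomorphism over $U$. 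Flatness of both sides over $\frakt^*$ --- on the left from equivariant formality of $\Gr$ (Schubert cell decomposition), on the right from smoothness of $J$ in Proposition \ref{Chevalley} --- then extends this isomorphism from $U$ to all of $\frakt^*$.

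The main obstacle is the integrality over $\bbZ_S$: both equivariant formality of $\Gr$ and the applicability of the localization theorem require inverting the small primes in $S$, and the explicit Chern-class computations defining $e^T$ need to be carried out integrally. One must also verify that the pairing between the fixed-point class $[t^\mu]$ and the character $\mu$ of $T^\vee$ is compatible with the first-Chern-class formula for the determinantal line bundle used to define $e^T$; tracking this canonical matching is precisely what pins down the identification asserted by the proposition.
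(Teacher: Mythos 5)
The paper offers no proof of this proposition: it is quoted verbatim from \cite[Theorem 6.1]{YZ}, so the only thing to compare against is the strategy of that source. Your outline does follow the same general line (localization at the $T^\vee$-fixed points $\{t^\mu\}$, matching the resulting torus with the centralizer via the determinant line bundle), but two of your steps have genuine gaps.

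First, the extension step at the end fails as stated. A homomorphism of flat (even smooth) affine group schemes over $\frakt^*$ that is an isomorphism over the dense open $U$ where $e^T$ lands in the regular semisimple locus need \emph{not} be an isomorphism everywhere: flatness of the source gives injectivity (torsion-freeness), but not surjectivity --- compare $\bbG_a\times\bbA^1\to\bbG_a\times\bbA^1$, $(y,t)\mapsto(ty,t)$, an isomorphism away from $t=0$ only. The integral surjectivity along the root hyperplanes and at the primes outside $S$ is precisely the hard content of \cite{YZ}; it is obtained there from an explicit description of $H_*^{T^\vee}(\Gr)$ in terms of Schubert/MV classes together with the precise Chern-class formula for $e^T$, not from equivariant formality. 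Relatedly, the verification that $[t^\mu]\mapsto\mu$ compatibly with $e^T$ is not a detail to be ``tracked'' at the end --- it is the entire content of the canonicity assertion, and without it you only have an abstract generic isomorphism of tori. Second, constructing $\Phi$ from the forgetful map and the preceding proposition ($\Spec H_*^{G^\vee(\calO)}(\Gr)\cong J$) inverts the logic of \cite{YZ}, where the $T^\vee$-equivariant theorem is proved first and the $G^\vee(\calO)$-equivariant statement is deduced from it by Weyl descent; within this paper's expository ordering that is formally admissible, but as a reconstruction of the cited proof it is circular. (Two minor slips: the generic fibre of $(e^T)^*I$ is the dual torus $T\subset G$ --- its function ring is indeed $\calO_U[X_*(T^\vee)]$, but it is not $T^\vee$ --- and the relevant locus in $\frakt^*$ is the complement of the finite root hyperplanes, not of ``affine coroot hyperplanes''.)
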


\subsection{Proof of Theorem \ref{Main1}}\label{proof main}
We begin with
\begin{prop}\label{a}The natural map
\begin{equation}\label{s}H_{G^\vee(\calO)}(\Gr,I^\la_*)\to H_{G^\vee(\calO)}(\Gr,i^\la_*)\end{equation}
is a surjective map of free $H(\bbB G^\vee)$-modules. Dually, the natural map
\begin{equation}\label{i}H_{G^\vee(\calO)}(\Gr,i_!^\la)\to H_{G^\vee(\calO)}(\Gr,I^\la_!)\end{equation}
is splitting injective as free $H(\bbB G^\vee)$-modules.
\end{prop}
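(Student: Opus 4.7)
The plan is to first reduce the two statements to each other via Verdier duality, then to establish freeness of both sides of (\ref{s}) (the right side via the orbit structure, the left side via parity vanishing), and finally to prove surjectivity by a parity argument on the long exact sequence for the cone of $I^\la_*\to i^\la_*$.

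Verdier duality on $\Gr$ gives $\bbD i^\la_*=i^\la_!$ and $\bbD I^\la_*=I^\la_!$, using that $\Gr^\la$ is smooth so that $\underline{\bbZ}_{\Gr^\la}[\dim\Gr^\la]$ is self-dual up to a Tate twist. Consequently the map $I^\la_*\to i^\la_*$ in (\ref{s}) is Verdier dual to the map $i^\la_!\to I^\la_!$ in (\ref{i}). Applying equivariant hypercohomology and using the perfect pairing with equivariant Borel--Moore homology (finite generation is guaranteed because the supports $\bGr^\la$ are projective), one sees that a surjection of free $H(\bbB G^\vee)$-modules on one side is equivalent to a split injection of free modules on the other. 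It therefore suffices to prove (\ref{s}).

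For freeness in (\ref{s}), we use the identification $\Gr^\la\cong G^\vee(\calO)/K^\la$, where $K^\la=\on{Stab}_{G^\vee(\calO)}(t^\la)$ is a parahoric whose reductive Levi quotient is the Levi subgroup $L^\la\subset G^\vee$ centralizing $\la$, with Weyl group $W_\la$. This gives
\[ H_{G^\vee(\calO)}(\Gr,i^\la_*)\cong H^{*+\dim\Gr^\la}(\bbB K^\la)\cong H^{*+\dim\Gr^\la}(\bbB T^\vee)^{W_\la}, \]
which is free of rank $|W/W_\la|$ over $H(\bbB G^\vee)\cong H(\bbB T^\vee)^W$ by Pittie--Steinberg/Demazure theory over $\bbZ_S$. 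Freeness of $H_{G^\vee(\calO)}(\Gr,I^\la_*)$ over $H(\bbB G^\vee)$ follows from parity vanishing for IC sheaves in the Satake category with $\bbZ_S$-coefficients, as in \cite{MV,YZ}.

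For surjectivity in (\ref{s}), set $C={}^p\tau_{\geq 1}(i^\la_*)$, giving the distinguished triangle $I^\la_*\to i^\la_*\to C\to I^\la_*[1]$ with $C$ supported on $\bGr^\la\setminus\Gr^\la$. The long exact sequence reduces surjectivity to the vanishing of the connecting map $\delta\colon H_{G^\vee(\calO)}(\Gr,i^\la_*)\to H_{G^\vee(\calO)}(\Gr,C)[1]$. For any $\mu\leq\la$, the difference $\dim\Gr^\la-\dim\Gr^\mu$ is even (each simple coroot of $G^\vee$ pairs to $2$ with $2\rho$), so all strata dimensions share a common parity with $\dim\Gr^\la$. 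Parity vanishing then concentrates both $H_{G^\vee(\calO)}(\Gr,i^\la_*)$ and $H_{G^\vee(\calO)}(\Gr,C)$ in degrees of this common parity, so after the degree shift by $1$ the source and target of $\delta$ lie in opposite parities and $\delta=0$. The main obstacle is precisely this last parity assertion for $C$: because $C$ has nontrivial perverse cohomology in several perverse degrees $\geq 1$ rather than being a single perverse sheaf, one must invoke that $i^\la_*$ is itself a parity complex (in the sense of Juteau--Mautner--Williamson) over $\bbZ_S$, so that its perverse truncations inherit the parity property.
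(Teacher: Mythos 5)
Your reduction via Verdier duality and your freeness arguments are sound, and the computation of $H_{G^\vee(\calO)}(\Gr,i^\la_*)$ as $H(\bbB K^\la)\cong H(\bbB T^\vee)^{W_\la}$, free of rank $|W/W_\la|$ by Demazure, is a legitimate alternative to the paper's route (which instead base-changes to $H(\bbB T^\vee)$, invokes $H_{T^\vee}(\Gr,\calF)\cong H(\Gr,\calF)\otimes H(\bbB T^\vee)$ from \cite{YZ}, and uses the $\bbZ$-freeness of $H(\Gr,I^\la_*)$ from \cite{MV}).

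The surjectivity step, however, has a genuine gap, and it is exactly where you flag it. You need $H_{G^\vee(\calO)}(\Gr,C)$ for $C={}^p\tau_{\geq 1}(i^\la_*)$ to be concentrated in the parity of $\dim\Gr^\la$, but the justification you offer --- that $i^\la_*$ is a parity complex and ``its perverse truncations inherit the parity property'' --- is false as a general principle: perverse truncation does not preserve parity complexes (this failure is precisely the raison d'\^etre of parity sheaves as a substitute for perverse sheaves). Worse, even if you filter $C$ by its perverse cohomologies ${}^pH^j(i^\la_*)[-j]$, $j\geq 1$, and grant that each ${}^pH^j(i^\la_*)$ has hypercohomology in the parity of $\dim\Gr^\la$ (MV parity vanishing for objects of the Satake category), the shift $[-j]$ flips the parity whenever $j$ is odd; so your argument additionally requires the vanishing of the odd perverse cohomologies of $i^\la_*$ over $\bbZ_S$, which is a nontrivial assertion you have not established. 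Trying instead to extract the parity of $H(\Gr,C)$ from the long exact sequence of the triangle $I^\la_*\to i^\la_*\to C$ is circular, since that vanishing is equivalent to the surjectivity you are trying to prove. The paper avoids all of this with a short direct argument: the adjunction map $\underline{\bbZ_S}[\dim\bGr^\la]\to i^\la_*$ on $\bGr^\la$ factors through the perverse truncation as $\underline{\bbZ_S}[\dim\bGr^\la]\to I^\la_*\to i^\la_*$ (the constant sheaf lies in ${}^pD^{\leq 0}$), and the composite on hypercohomology is the restriction $H(\bGr^\la)\to H(\Gr^\la)$, which is surjective; hence so is $H(\Gr,I^\la_*)\to H(\Gr,i^\la_*)$. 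You should replace your cone-and-parity argument with this factorization.
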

\begin{proof}  We first show that $H_{G^\vee(\calO)}(\Gr,I^\la_*)$ is finitely generated and flat over $H(\bbB G^\vee)$.  Then since $H_{G^\vee(\calO)}(\Gr,I^\la_*)$ is graded, it must be free.

Since $H(\bbB G^\vee)\to H(\bbB T^\vee)$ is finite and faithfully flat by taking $\bbZ_S$-coefficients (see Proposition \ref{Chevalley}), it is enough to show $H_{G^\vee(\calO)}(\Gr,I^\la_*)\otimes_{H(\bbB G^\vee)}H(\bbB T^\vee)$  is finitely generated and flat over $H(\bbB T^\vee)$. First, the natural map
\[H_{G^\vee(\calO)}(\Gr,I^\la_*)\otimes_{H(\bbB G^\vee)}H(\bbB T^\vee)\to H_{T^\vee}(\Gr,I_*^\la)\]
is an isomorphism and by \cite[Lemma 2.2]{YZ}, there is a canonical isomorphism of $H(\bbB T^\vee)$-modules
\[H_{T^\vee}(\Gr,I_*^\la)\cong H(\Gr,I_*^\la)\otimes H(\bbB T^\vee).\]
Secondly, according to \cite[\S 3]{MV}, $H(\Gr,I_*^\la)$ is a free $\bbZ_S$-module of finite rank. Therefore, $H_{G^\vee(\calO)}(\Gr,I^\la_*)\otimes_{H(\bbB G^\vee)}H(\bbB T^\vee)$  is finitely generated and flat over $H(\bbB T^\vee)$. Similarly, one can show that $H_{G^\vee(\calO)}(\Gr,i^\la_*)$, $H_{G^\vee(\calO)}(\Gr,i_!^\la)$ and $H_{G^\vee(\calO)}(\Gr,I^\la_!)$ are also free over $H(\bbB G^\vee)$.

Let $\bbD$ be the Verdier duality functor. It is known by \cite[Proposition 8.1]{MV} that $\bbD I_*^\la=I_!^\la$. Therefore, \eqref{s} implies \eqref{i}. In addition, the Leray spectral sequence $H(\bbB G^\vee, H(\Gr, I^\la_*))\Rightarrow H_{G^\vee(\calO)}(\Gr,I_*^\la)$ (resp. $H(\bbB G^\vee, H(\Gr, i^\la_*))\Rightarrow H_{G^\vee(\calO)}(\Gr,i_*^\la)$) degenerates since all the cohomology concentrate on even degrees. Therefore, it is enough to show that
\[H(\Gr,I_*^\la)\to H(\Gr,i_*^\la)\]
is surjective.

Observe that the natural map $\bbZ_S[\dim\overline{\Gr}^\la]\to i^\la_*$ of complex of sheaves on $\overline{\Gr}^\la$ factors as
\begin{equation*}\bbZ_S[\dim\overline{\Gr}^\la]\to I^\la_*\to i^\la_*.\end{equation*}
Now the proposition follows from the fact that $H(\overline{\Gr}^\la)\to H(\Gr^\la)$ is surjective.
\end{proof}
\begin{rmk}This remark will not be used in the sequel. All sheaves in the remark are taken $\bbZ$-coefficients. Observe that $H^{-\dim\overline{\Gr}^\la}(\Gr,I_!^\la)\cong\bbZ$ and its  unique (up to sign) basis induces $\bbZ[\dim\overline{\Gr}^\la]\to I_!^\la$. One can show that the natural map $i^\la_!\to\bbZ[\dim\overline{\Gr}^\la]$ followed by this $\bbZ[\dim\overline{\Gr}^\la]\to I_!^\la$ is the natural perverse truncation map (up to sign). In other words, we have
\[i_!^\la\to\bbZ[\dim\overline{\Gr}^\la]\to I^\la_!.\]
The proof of the proposition implies that
$H(\Gr,i^\la_!)\to H(\Gr,I^\la_!)$
is splitting injective (as $\bbZ$-modules). But one can show a stronger result holds. Namely, the map $H(\overline{\Gr}^\la,\bbZ[\dim\overline{\Gr}^\la])\to H(\Gr,I^\la_!)$ is splitting injective.
\end{rmk}

\medskip

Now we begin to prove Theorem \ref{Main1}. We first show that there is a commutative diagram of $J$-modules
\[\xymatrix{H_{G^\vee(\calO)}(\Gr,I_!^\la)\ar[r]\ar_\cong[d]& H_{G^\vee(\calO)}(\Gr, I_*^\la)\ar^\cong[d]\\
\calW^\la\ar[r]&\calS^\la
}.\]
It is enough to show that after the base change $\varpi\circ f:\frakt\to\frakc$, such a diagram exists as $(\varpi\circ f)^*J\cong (e^T)^*I$-modules (see \eqref{dia}). On the one hand, by definition, $(\varpi\circ f)^*\calW^\la\to(\varpi\circ f)^*\calS^\la$ as the $(e^T)^*I$-modules is the same as $\calO_{\frakt^*}\otimes W^\la\to\calO_{\frakt^*}\otimes S^\la$. On the other hand,
$(\varpi\circ f)^*H_{G^\vee(\calO)}(\Gr,I_!^\la)\to(\varpi\circ f)^*H_{G^\vee(\calO)}(\Gr, I_*^\la)$ as $(e^T)^*I\cong\Spec\ H^{T^\vee}_*(\Gr)$-modules, is isomorphic to $H_{T^\vee}(\Gr,I_!^\la)\to H_{T^\vee}(\Gr, I_*^\la)$, which is also isomorphic to $\calO_{\frakt^*}\otimes W^\la\to\calO_{\frakt^*}\otimes S^\la$ by \cite{YZ}.

To finish the proof of the theorem, we will show that there exists a $J$-modules isomorphism $H_{G^{\vee}(\calO)}(\Gr,i^\la_*)\to\calL_*^\la$ making the following diagram commute
\[\xymatrix{H_{G^{\vee}(\calO)}(\Gr,I^\la_*)\ar^{\cong}[d]\ar[r]&H_{G^{\vee}(\calO)}(\Gr,i^\la_*)\ar^\cong[d]\\
\calS^\la\ar[r]&\calL_*^\la.
}\]
The isomorphism $H_{G^\vee(\calO)}(\Gr,i_!^\la)\to\calL_!^\la$ is deduced similarly.

All the modules in the above diagram are (locally) free over $\frakt/\!\!/W$ (see Proposition \ref{freeness} and Proposition \ref{a}). In addition, the horizontal maps are surjective by Proposition \ref{a} and Theorem \ref{surjectivity,family version}. Therefore, to show that there is a map $H_{G^{\vee}(\calO)}(\Gr,i^\la_*)\to\calL_*^\la$ making the above diagram commute, it is enough to show there is such a map at the generic point $\frakt/\!\!/W$. We can even just prove that such a map exists over $\xi$, where $\xi$ is the generic point of $\frakt$, which maps to $\frakt/\!\!/W$ via $\xi\to\frakt\stackrel{\varpi}{\to}\frakt/\!\!/W$.

We know that $J_\xi$ is isomorphic to $I_\xi=Z_{G_\xi}(\xi)$, the
centralizer of $\xi$ in $G_\xi$. Since $\xi\in \frakt$,
$Z_\xi(G_\xi)\cong T_\xi$. By the equivariant localization theorem,
the map $H_{G^{\vee}(\calO)}(\Gr,I^\la_*)_\xi\to
H_{G^{\vee}(\calO)}(\Gr,i^\la_*)_\xi$ as modules over $\Spec
H^{G^{\vee}(\calO)}_*(\Gr)_\xi\cong T_\xi$ is killing all the weight
spaces of $H_{G^{\vee}(\calO)}(\Gr,I^\la_*)_\xi$ whose weights are
not in the $W$-orbit of $\la$. On the other hand, the base change of
\eqref{Groth alteration} to $\xi$ is the same as the closed
embedding of the $T_\xi$-fixed point subscheme $(\calP_\la)_\xi$ of
$\calP_\la$ to $\calP_\la$. Therefore, by the localization theorem
of coherent sheaves, the map $(\calS^\la)_\xi\to(\calL_*^\la)_\xi$
as $T_ \xi$-modules also corresponds to killing all the weight
spaces of $(\calS^\la)_\xi$ whose weights are not in the $W$-orbit
of $\la$. It is clear from the above descriptions that a map
$H_{G^{\vee}(\calO)}(\Gr,i^\la_*)_\xi\to(\calL_*^\la)_\xi$ exists.

\end{document}